\newcommand{\R}{\mathbf{R}}
\newcommand{\pr}{\mathbf{P}}
\newcommand{\N}{\mathbb{N}}
\newcommand{\deff}{f_{A,B,t}(s)}
\newcommand{\wolny}{\frac{(xy)^{-\mu-1/2}}{\sqrt{2\pi t}}\exp{\left(-\frac{(x-y)^2}{2t}\right)}}
\newcommand{\zabity}{\frac{(xy)^{-\mu-1/2}}{\sqrt{2\pi t}}\left[\exp{\left(-\frac{(x-y)^2}{2t}\right)}-\exp{\left(-\frac{(x+y-2)^2}{2t}\right)}\right]}
\newcommand{\zabityA}{a^{2\mu+1}\frac{(xy)^{-\mu-1/2}}{\sqrt{2\pi t}}\left[\exp{\left(-\frac{(x-y)^2}{2t}\right)}-\exp{\left(-\frac{(x+y-2a)^2}{2t}\right)}\right]}
\newcommand{\expa}{\exp{\left(-\frac{(x-y)^2}{2t}\right)}}
\newcommand{\expb}{\exp{\left(-\frac{(x+y-2)^2}{2t}\right)}}
\newcommand{\expz}{\exp{\left(-\frac{2(x-1)(y-1)}{t}\right)}}
\newcommand{\expzz}{\exp{\left(\frac{2(x-1)(y-1)}{t}\right)}}
\newcommand{\eb}{\mathbf{E}^{(\mu)}_x}
\theoremstyle{plain}
\newtheorem{theorem}{Theorem}
\newtheorem{lemma}{Lemma}
\newtheorem{proposition}{Proposition}
\theoremstyle{definition}
\theoremstyle{remark}
\DeclareMathOperator\erf{erf}
\newcommand{\formula}[2][nolabel]
{\ifthenelse{\equal{#1}{nolabel}}
 {\begin{align*} #2 \end{align*}}
 {\ifthenelse{\equal{#1}{}}
  {\begin{align} #2 \end{align}}
  {\begin{align} \label{#1} #2 \end{align}}
 }
}
\numberwithin{equation}{section}
\begin{document}

\title[Asymptotic behaviour of the Bessel heat kernels]{Asymptotic behaviour of the Bessel heat kernels}

\author{Kamil Bogus}
\address{Kamil Bogus, Faculty of Pure and Applied Mathematics\\ Wroc{\l}aw University of Science and Technology \\ ul.
Wybrze{\.z}e Wyspia{\'n}\-skiego 27 \\ 50-370 Wroc{\l}aw,
Poland
}
\email{kamil.bogus@pwr.edu.pl}

\keywords{Bessel differential operator, heat kernel, asymptotic expansion, half-line, Bessel process, transition probability density}
\subjclass[2010]{35K08, 60J60}

\thanks{The project was funded by the National Science Centre grant ETIUDA no. 2016/20/T/ST1/00257.}

%%%%%%%%%%%%%%%%%%%%%%%%%%%%%%%%%%%%%%%%%%%%%%%%%%%%%%%%%
%%%%%%%%%%%%%%%%%%%%%%%%%%%%%%%%%%%%%%%%%%%%%%%%%%%%%%%%%
%%																			%%
%%																			%%
%%									ABSTRACT	 								%%
%%																			%%
%%																			%%
%%%%%%%%%%%%%%%%%%%%%%%%%%%%%%%%%%%%%%%%%%%%%%%%%%%%%%%%%
%%%%%%%%%%%%%%%%%%%%%%%%%%%%%%%%%%%%%%%%%%%%%%%%%%%%%%%%%

\begin{abstract}{ We consider Dirichlet heat kernel $p_a^{(\mu)}(t,x,y)$ for the Bessel differential operator  $L^{(\mu)}=\frac{d^2}{dx^2}+\frac{2\mu+1}{2x}$, $\mu\in\R$, in half-line $(a,\infty)$, $a>0$, and provide its asymptotic expansions for $xy/t\rightarrow\infty$.}
\end{abstract}

\maketitle

%%%%%%%%%%%%%%%%%%%%%%%%%%%%%%%%%%%%%%%%%%%%%%%%%%%%%%%%%
%%%%%%%%%%%%%%%%%%%%%%%%%%%%%%%%%%%%%%%%%%%%%%%%%%%%%%%%%
%%																			%%
%%																			%%
%%								INTRODUCTION 									%%
%%																			%%
%%																			%%
%%%%%%%%%%%%%%%%%%%%%%%%%%%%%%%%%%%%%%%%%%%%%%%%%%%%%%%%%
%%%%%%%%%%%%%%%%%%%%%%%%%%%%%%%%%%%%%%%%%%%%%%%%%%%%%%%%%

\section{Introduction}
\label{section:introduction}

Let $\mu\in\R$ and $a>0$. We consider Dirichlet heat kernel $p_a^{(\mu)}(t,x,y)$ in half--line $(a,\infty)$ (with respect to the reference measure $m^{(\mu)}(dy)=m^{(\mu)}(y)dy=y^{2\mu+1}dy$)  for the Bessel differential operator
\formula{
L^{(\mu)}=\frac{d^2}{dx^2}+\frac{2\mu+1}{2x}\frac{d}{dx}.
}
This function is a fundamental solution of the heat equation $2\partial_t=L^{(\mu)}$ with Dirichlet boundary condition at the point $a$. Our main objective was to derive  the precise asymptotic expansion of $p_a^{(\mu)}(t,x,y)$ for $xy/t\rightarrow\infty$. The main result of the paper can be stated as follows.

%%%%%%%%%%%%%%%%%%%%%%%%%%%%%%%%%%%%%%%%%%%%%%%%%%%%%%%%%
%%%%%%%%%%%%%%%%%%%%%%%%%%%%%%%%%%%%%%%%%%%%%%%%%%%%%%%%%
%%																			%%
%%																			%%
%%				INTRODUCTION			MAIN RESULT		THEOREM 1					%%
%%																			%%
%%																			%%
%%%%%%%%%%%%%%%%%%%%%%%%%%%%%%%%%%%%%%%%%%%%%%%%%%%%%%%%%
%%%%%%%%%%%%%%%%%%%%%%%%%%%%%%%%%%%%%%%%%%%%%%%%%%%%%%%%%

\begin{theorem}
For $\mu\in\R, \ a>0$ we have
\formula[main]{
p_a^{(\mu)}(t,x,y)=\zabityA\notag\\
\times\left[1+E_{\mu}(t/a^2,x/a,y/a)\right],
}
where  $x,y>a, \ t>0$ and estimates of the error term are given by
\formula[general:bound]{
|E_\mu(t/a^2,x/a,y/a)|\stackrel{\mu}{\lesssim} \frac{t}{a^2}\mathbf{1}_{\{0<t<a^2 t_0(\mu)\}}+\frac{t}{xy}\mathbf{1}_{\{t\geq a^2 t_0(\mu)\}},
}
while $xy/t\rightarrow\infty$, with the constant $t_0(\mu)>0$ defined as follows
\formula[tzero]{
t_0(\mu)=\begin{cases}1 & \phantom{lo}\text{for}\quad |\mu|\leq1/2, \\ \frac{8}{4\mu^2-1} & \phantom{lo}\text{for}\quad |\mu|>1/2.\end{cases}
}
\end{theorem}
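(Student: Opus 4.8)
\medskip
\noindent\emph{Proof plan.} The plan is to reduce to $a=1$ by self--similarity, to peel off the first term of $\zabityA$ from the free Bessel kernel via the large--argument expansion of $I_\mu$, and to recover the image term from the first--passage decomposition, controlling the two remainders separately. Since $s\mapsto a^{-1}\xmu_{a^{2}s}$ is again a Bessel process of index $\mu$ started from $x/a$, meeting the level $1$ exactly when $\xmu$ meets $a$, the killed kernels obey a scaling relation $p_a^{(\mu)}(t,x,y)=a^{-\gamma}p_1^{(\mu)}(t/a^{2},x/a,y/a)$ (with $\gamma$ fixed by the normalisation), under which $\zabityA$ turns into $\zabity$ and \eqref{general:bound} into the corresponding bound at $a=1$. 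It therefore suffices to prove
\formula{
p_1^{(\mu)}(t,x,y)=\zabity\,\bigl[1+E_\mu(t,x,y)\bigr]
}
with $|E_\mu(t,x,y)|\stackrel{\mu}{\lesssim} t\,\ind_{\{0<t<t_0(\mu)\}}+(t/xy)\,\ind_{\{t\ge t_0(\mu)\}}$ as $xy/t\to\infty$.

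\smallskip
\noindent\emph{Free part.} By the strong Markov property at $\tau_1:=\inf\{s:\xmu_s=1\}$ and path--continuity,
\formula{
p_1^{(\mu)}(t,x,y)=p^{(\mu)}(t,x,y)-\int_0^{t}q^{(\mu)}(s,x)\,p^{(\mu)}(t-s,1,y)\,ds,
}
with $q^{(\mu)}(\cdot,x)$ the density of $\tau_1$ under $\pb$. Writing $I_\mu(z)=e^{z}(2\pi z)^{-1/2}h_\mu(z)$, $h_\mu(z)=1-\tfrac{4\mu^{2}-1}{8z}+O(z^{-2})$, the explicit form of $p^{(\mu)}$ gives $p^{(\mu)}(t,x,y)=\wolny\cdot h_\mu(xy/t)$, so, since $xy/t\to\infty$, $p^{(\mu)}(t,x,y)$ equals the first summand of $\zabity$ times $1+O\!\bigl(\tfrac{(4\mu^{2}-1)t}{8xy}\bigr)$. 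This already exhibits the error $t/xy$ in the range $t\ge t_0(\mu)$ and the appearance of $4\mu^{2}-1$ in \eqref{tzero}.

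\smallskip
\noindent\emph{Image part.} The core step is that the first--passage integral reproduces $\tfrac{(xy)^{-\mu-1/2}}{\sqrt{2\pi t}}\,\expb$ up to a small relative error. From $\eb\bigl[e^{-\lambda^{2}\tau_1/2}\bigr]=x^{-\mu}K_\mu(\lambda x)/K_\mu(\lambda)$ and the large--$\lambda$ expansion of $K_\mu$, Laplace inversion gives
\formula{
q^{(\mu)}(s,x)=x^{-\mu-1/2}\,\frac{x-1}{\sqrt{2\pi s^{3}}}\,e^{-\frac{(x-1)^{2}}{2s}}\Bigl[\,1-\tfrac{(4\mu^{2}-1)s}{8x}+\cdots\Bigr],
}
while the free part gives $p^{(\mu)}(t-s,1,y)=\tfrac{y^{-\mu-1/2}}{\sqrt{2\pi(t-s)}}e^{-\frac{(y-1)^{2}}{2(t-s)}}\bigl(1+O(\tfrac{t-s}{y})\bigr)$ where $y/(t-s)$ is large. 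Inserting the leading terms and using the classical identity
\formula{
\int_0^{t}\frac{x-1}{\sqrt{2\pi s^{3}}}e^{-\frac{(x-1)^{2}}{2s}}\cdot\frac{1}{\sqrt{2\pi(t-s)}}e^{-\frac{(y-1)^{2}}{2(t-s)}}\,ds=\frac{1}{\sqrt{2\pi t}}\,e^{-\frac{(x+y-2)^{2}}{2t}},
}
together with $x^{-\mu-1/2}y^{-\mu-1/2}=(xy)^{-\mu-1/2}$, produces exactly the second summand of $\zabity$; subtracted from the free part, the main term is $\zabity$. The remainder is collected from (i) the multiplicative corrections $\tfrac{(4\mu^{2}-1)s}{8x}$ and $O(\tfrac{t-s}{y})$, integrated against the Gaussian weights (the integrand concentrating at $s^{\ast}=\tfrac{(x-1)t}{x+y-2}$), and (ii) the edges $s\downarrow0$, where $q^{(\mu)}(s,x)$ is exponentially small relative to $\expb$, and $s\uparrow t$, where $y/(t-s)\to\infty$ but a Gaussian majorant for $p^{(\mu)}$ is needed when $y$ is of order $1$.

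\smallskip
\noindent\emph{Two regimes, and the main obstacle.} For $t\ge a^{2}t_0(\mu)$ the hypothesis $xy/t\to\infty$ forces $xy\to\infty$, hence all Bessel arguments above exceed the $\mu$--dependent level past which $|h_\mu(z)-1|\stackrel{\mu}{\lesssim} z^{-1}$ and its $K_\mu$--analogue hold, and the error is $\stackrel{\mu}{\lesssim} t/xy$. For $0<t<a^{2}t_0(\mu)$ this may fail in a boundary layer ($xy$ of order $a^{2}$, yet $xy/t\to\infty$ because $t\to0$); there I compare $p_1^{(\mu)}$ with $\zabity$ through a Duhamel identity whose perturbation is the difference between the Bessel first--order coefficient and its flat counterpart --- a function bounded on $(1,\infty)$ acting over time $t$ --- so Duhamel's series is dominated by $\sum_k(ct)^{k}$ and $|E_\mu|\stackrel{\mu}{\lesssim} t$; the value of $t_0(\mu)$ in \eqref{tzero} is exactly what makes the constants in the Bessel expansions and in the Duhamel estimate absorbable. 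The genuinely hard point is the image part: since $\zabity$ itself vanishes linearly as $x\to1^{+}$ or $y\to1^{+}$, one must exhibit a matching cancellation in $p^{(\mu)}(t,x,y)-\int_0^{t}q^{(\mu)}p^{(\mu)}\,ds$ near the boundary so that the \emph{relative} error $E_\mu$ stays small, and likewise control the range $s\uparrow t$ --- this, not the bulk asymptotics, is where most of the technical work lies. (For $t$ large one could instead start from the $\lambda$--integral representation of $p_a^{(\mu)}$ built from $I_\mu,K_\mu$ via the resolvent of $L^{(\mu)}$ on $(a,\infty)$ and run a Watson/stationary--phase analysis, at the cost of heavier bookkeeping.)
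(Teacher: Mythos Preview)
Your long--time scheme --- Hunt formula, large--argument expansion of $I_\mu$, and the convolution identity $\int_0^t \tfrac{x-1}{\sqrt{2\pi s^3}}e^{-(x-1)^2/(2s)}\tfrac{1}{\sqrt{2\pi(t-s)}}e^{-(y-1)^2/(2(t-s))}\,ds=\tfrac{1}{\sqrt{2\pi t}}e^{-(x+y-2)^2/(2t)}$ --- is exactly the paper's route (the identity is its Lemma~1), so for $t\ge t_0(\mu)$ and $x,y$ away from the boundary there is no real difference.

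Where you diverge is the short--time regime $0<t<t_0(\mu)$. The paper does not set up a Duhamel/parametrix expansion; it uses the absolute continuity of $\pr_x^{(\mu)}$ with respect to $\pr_x^{(1/2)}$: the Radon--Nikodym derivative on $\mathcal{F}_t$ is $(R_t/x)^{\mu-1/2}\exp\bigl(-\tfrac{\mu^2-1/4}{2}\int_0^t R_s^{-2}\,ds\bigr)$, and on $\{t<T_1\}$ one has $0\le\int_0^t R_s^{-2}\,ds\le t$. Since $p_1^{(1/2)}$ is \emph{exactly} $g^{(1/2)}$, this gives $|E_\mu|\stackrel{\mu}{\lesssim} t$ in one line, and the threshold $t_0(\mu)=8/(4\mu^2-1)$ is visibly what keeps $1-\tfrac{\mu^2-1/4}{2}t>0$. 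Your Duhamel argument is morally the analytic counterpart of this and could be made to work, but with noticeably more bookkeeping.

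The genuine gap is the boundary layer $1<x<2<y$ with $(x-1)(y-1)/t$ bounded, which you flag but do not resolve. Your plan --- expand $q^{(\mu)}(s,x)$ and $p^{(\mu)}(t-s,1,y)$, integrate, and control the corrections $O(s/x)$, $O((t-s)/y)$ --- breaks down here: when $(x-1)(y-1)/t\to0$ the leading term $g^{(\mu)}(t,x,y)$ vanishes, while those corrections do not, so they are not small \emph{relative} to $g^{(\mu)}$ and no cancellation is visible from the expansions alone. The paper sidesteps this entirely. One side of the inequality is free from the Girsanov comparison above ($p_1^{(\mu)}\le g^{(\mu)}$ for $\mu\ge1/2$, the reverse for $0\le\mu\le1/2$); the other side is obtained by bounding $r_1^{(\mu)}/p^{(\mu)}$ directly via Laforgia's monotonicity inequalities for $I_\mu$, e.g.\ $I_\mu(y/(t-s))\le I_\mu(xy/t)\,(t/(t-s))^\mu x^\mu e^{y/(t-s)-xy/t}$, which converts the Hunt integral into one against the \emph{full} kernel $p^{(\mu)}(t,x,y)$ rather than its Gaussian leading term (Propositions~4--5). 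That device is what produces a relative error $O(t/xy)$ uniformly as $x\to1^+$, and it is not present in your outline.
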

 \noindent Here $f\stackrel{\mu}{\lesssim} g$ means that there exists a constant $C=C(\mu)>0$ such that $f\leq Cg$ for all indicated arguments.

%%%%%%%%%%%%%%%%%%%%%%%%%%%%%%%%%%%%%%%%%%%%%%%%%%%%%%%%%
%%%%%%%%%%%%%%%%%%%%%%%%%%%%%%%%%%%%%%%%%%%%%%%%%%%%%%%%%
%%																			%%
%%																			%%
%%					INTRODUCTION		MAIN RESULT		THEOREM 2					%%
%%																			%%	
%%																			%%
%%%%%%%%%%%%%%%%%%%%%%%%%%%%%%%%%%%%%%%%%%%%%%%%%%%%%%%%%
%%%%%%%%%%%%%%%%%%%%%%%%%%%%%%%%%%%%%%%%%%%%%%%%%%%%%%%%%

It is worth emphasizing,  that in some r\'egime  of the space parameters we are able to give more precise (long--time) estimates of the error  term. In particular,  when  $x,y$ are bounded away from  the boundary,  i.e. level  $a>0$, we can write explicitly the first $n$ (for given $n\in\mathbf{N}$) terms in the expansion of $E_{\mu}(t,x,y)$. Moreover, even when one of the space variables is located near the boundary, we can give  one term more in an expansion of the error term in comparison with \eqref{general:bound}. However, for this purpose, additional assumption  on the quantity $(x-a)(y-a)/t$ is made.

  \begin{theorem}
Let $\mu\in\R, \ a>0, \ n\in\mathbf{N}$ and $t_0(\mu)$ is defined as in \eqref{tzero}. We have 
\formula{
p_a^{(\mu)}(t,x,y)=\zabityA\notag\\
\times\left[1+E_{\mu}(t/a^2,x/a,y/a)\right],
}
and  estimates of the error term for $xy/t\rightarrow\infty$  are given by
\begin{enumerate}[(i)]
\item
$$\left|E_\mu^{(n)}(t/a^2,x/a,y/a)-\sum_{k=0}^{n-1} c_k^{(\mu)}\left(-\frac{t}{xy}\right)^{k}\right|\stackrel{\mu}{\lesssim} \left(\frac{t}{xy}\right)^{n},
$$
whenever $t\geq t_0(\mu)$ and $x,y>2a$. 
 The coefficients $c_k^{(\mu)}$ are expressed explicitly as
\formula[ckmu]{
c_{0}^{(\mu)}=0, \qquad c_k^{(\mu)}=\frac{(4\mu^2-1^2)\ldots(4\mu^2-(2k-1)^2)}{8^k k!}, \quad k\in\mathbf{N},
}
\item 
$$\left|E_\mu^{(2)}(t/a^2,x/a,y/a)-\frac{1-4\mu^2}{8}\frac{t}{xy}\right|\stackrel{\mu}{\lesssim} \left(\frac{t}{xy}\right)^2\/,$$
whenever $ t\geq t_0(\mu),\ a<x<2a<y$ and $\frac{(x-a)(y-a)}{t}\geq 1$.
\end{enumerate}
 \end{theorem}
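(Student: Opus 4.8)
The plan is to re-enter the exact representation of $E_\mu$ used in the proof of \eqref{main} --- the decomposition of $p_a^{(\mu)}$ by Hunt's formula at the first passage time $\tau_a$ of the level $a$, $p_a^{(\mu)}(t,x,y)=\pmu(t,x,y)-\eb_x[\tau_a<t;\,\pmu(t-\tau_a,a,y)]$, where $\pmu(t,x,y)=t^{-1}(xy)^{-\mu}I_\mu(xy/t)\,e^{-(x^2+y^2)/(2t)}$ is the free Bessel kernel on $(0,\infty)$ --- and to carry the asymptotics one, resp.\ $n$, order(s) further than there. Because $E_\mu$ enters \eqref{main} only through the scale--invariant arguments $t/a^2,x/a,y/a$, I normalise $a=1$ throughout. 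At the outset I would record the classical large--argument expansion $I_\mu(z)=\tfrac{e^z}{\sqrt{2\pi z}}\bigl(\sum_{k=0}^{n-1}(-1)^k\gamma_k(\mu)z^{-k}+R_n(z)\bigr)$ with $|R_n(z)|\stackrel{\mu}{\lesssim}z^{-n}$ for $z$ bounded away from $0$, where $\gamma_k(\mu)=(4\mu^2-1^2)\cdots(4\mu^2-(2k-1)^2)/(8^k\,k!)$ are exactly the $c_k^{(\mu)}$ of \eqref{ckmu} --- the term $\gamma_0=1$ being absorbed into the leading factor $\zabityA$, which is why $c_0^{(\mu)}=0$. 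Since $z=xy/t\to\infty$ by hypothesis, this expansion is the source of the polynomial $\sum_{k}c_k^{(\mu)}(-t/(xy))^{k}$; a direct computation of the exponents shows, moreover, that $\zabityA$ equals the leading term of $\pmu(t,x,y)$ minus the leading term of the Hunt correction, the latter being $\expz$ times the former.

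In the régime of (i), $x,y>2$ forces $(x-1)(y-1)>\tfrac14 xy$, so the entire Hunt correction is $\le\expz\lesssim e^{-xy/(2t)}$ times the free term, hence $o\bigl((t/(xy))^n\bigr)$ for every $n$ as $xy/t\to\infty$ and can be absorbed into the error. Then $E_\mu^{(n)}$ equals, up to such exponentially small terms, the ratio of $\pmu(t,x,y)$ to its own leading term minus one; inserting the $I_\mu$--expansion and using $xy/t-(x^2+y^2)/(2t)=-(x-y)^2/(2t)$ this is $\sum_{k=1}^{n-1}c_k^{(\mu)}(-t/(xy))^{k}+O\bigl((t/(xy))^{n}\bigr)$, which is (i); the constant in $\stackrel{\mu}{\lesssim}$ is that of the Bessel remainder $R_n$ together with the discarded exponential terms. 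The threshold $t\ge t_0(\mu)$ --- equal to $1/c_1^{(\mu)}$ when $|\mu|>1/2$ --- is precisely what places us past the short--time régime of \eqref{general:bound}, where $t/a^2$, not $t/(xy)$, is the governing small quantity and the expansion in powers of $t/(xy)$ is not the relevant one.

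In the régime of (ii) the point $x\in(1,2)$ is close to the boundary, so the Hunt correction is no longer exponentially small --- under the hypothesis $(x-1)(y-1)/t\ge1$ one only has $\expz\le e^{-2}$ --- and must be expanded to second order. Its natural first approximation is the Bessel analogue of the reflection principle, $\expz\,\pmu(t,x,y)$ (exact for $|\mu|=1/2$, where $c_1^{(\mu)}=0$), which already reproduces both the reflected Gaussian $\expb$ in $\zabityA$ and the first correction factor $1+c_1^{(\mu)}(-t/(xy))$: indeed, using $\expa\cdot\expz=\expb$ and the $n=2$ case of the $I_\mu$--expansion one gets $\pmu(t,x,y)-\expz\,\pmu(t,x,y)=\zabityA\bigl(1+c_1^{(\mu)}(-t/(xy))+O((t/(xy))^2)\bigr)$. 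Hence, writing $\mathcal R:=\eb_x[\tau_1<t;\,\pmu(t-\tau_1,1,y)]-\expz\,\pmu(t,x,y)$, statement (ii) reduces to $|\mathcal R|\stackrel{\mu}{\lesssim}(t/(xy))^2\,\zabityA$. Since $y>2$, the argument $y/(t-s)$ of $I_\mu$ inside $\pmu(t-s,1,y)$ is again large, so $\mathcal R$ is estimated by inserting the $n=2$ expansion of $I_\mu$ there and reducing to elementary Gaussian integrals in $s$ against the first--passage density $\pb(\tau_1\in ds)$, whose small--$s$ behaviour is explicit; the two hypotheses $(x-1)(y-1)/t\ge1$ and $t\ge t_0(\mu)$ are exactly what force the resulting remainders down to order $(t/(xy))^2$.

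The main obstacle is this bound $|\mathcal R|\stackrel{\mu}{\lesssim}(t/(xy))^2\,\zabityA$. One must develop the Hunt correction $\eb_x[\tau_1<t;\,\pmu(t-\tau_1,1,y)]$ uniformly over $1<x<2$, $y>2$, $(x-1)(y-1)/t\ge1$, $t\ge t_0(\mu)$: control the Bessel first--passage density $\pb(\tau_1\in ds)$ near $s=0$, verify that its leading convolution with $\pmu(t-s,1,y)$ reproduces $\expz\,\pmu(t,x,y)$ to first order, and track how the $I_\mu$--remainders and the $s$--integration errors combine so that, after dividing by $\zabityA=(\text{leading term of }\pmu)(1-\expz)$ --- where the boundedness $1-\expz\ge1-e^{-2}$ of the denominator is used --- only $O((t/(xy))^2)$ survives. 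Part (i), by contrast, is essentially bookkeeping once the exponential smallness of the Hunt correction for $x,y>2$ is observed.
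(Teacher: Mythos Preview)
Your plan matches the paper's proof. For (ii) the paper does exactly what you outline: it expands $q_x^{(\mu)}(s)=x^{1/2-\mu}q_x^{(1/2)}(s)\bigl(1+O_\mu(s/x)\bigr)$ (Uchiyama's asymptotics), inserts the $n=2$ expansion of $I_\mu\bigl(y/(t-s)\bigr)$ into $\pmu(t-s,1,y)$, and reduces the resulting integrals $\int_0^t f_{A,B,t}(s)\,ds$ and $\int_0^t s\,f_{A,B,t}(s)\,ds$ to explicit closed forms (Lemmas~\ref{mu12} and~\ref{mu32} in the Appendix). Your quantity $\mathcal R$ and the paper's $\mathcal I_1^{(\mu)},\mathcal I_2^{(\mu)}$ are the same computation in different packaging, since $\expz\cdot\pmu(t,x,y)$ has leading part $\bigl(1-c_1^{(\mu)}t/(xy)\bigr)\frac{(xy)^{-\mu-1/2}}{\sqrt{2\pi t}}\expb$ by the identity $\expa\cdot\expz=\expb$ you note.

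One warning on (i): the step you pass over as ``observed'' --- that the Hunt correction is $\le\expz$ times $\pmu$ for $x,y>2$ --- is exact only for $|\mu|=1/2$ and is otherwise the substantive content of the paper's Proposition~2. There the bound $r_1^{(\mu)}(t,x,y)\lesssim e^{-xy/(4t)}\,\pmu(t,x,y)$ is obtained by splitting into the regimes $x\ge t$ and $x<t$, in the latter further splitting the $s$-integral at $s=x$, and estimating each piece via the sharp bounds on $q_x^{(\mu)}$ together with the Appendix integral identities and the $K_\mu$-asymptotics. This is where the work in (i) actually lies; once it is in hand, your reduction of (i) to the Bessel remainder $R_n$ is indeed immediate.
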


In order to understand these results better,  notice that the case $a<x,y<2a, \ t\geq t_0(\mu)$  is  excluded from our consideration.  Indeed,  it holds then $xy/t<4a^2/t\leq 4a^2/t_0(\mu)$, which is a contradiction with the condition $xy/t\rightarrow\infty$. Moreover,  since $p_a^{(\mu)}(t,x,y)$ is a symmetric function of space variables $x,y$, i.e.
\formula[p1:symmetric]{
p_a^{(\mu)}(t,x,y)=p_a^{(\mu)}(t,y,x), \quad x,y>a>0, \quad t>0,
}
one can assume, without loss of the generality, that $x<y$. In consequence, it is enough to consider (for $t\geq t_0(\mu)$) two cases: when $a<x<2a<y$ and when $x,y>2a$. Note that one--term, long--time expansion of $E_{\mu}(t,x,y)$ was derived just in one situation, namely  when $a<x<2a<y$ and $0<\frac{(x-a)(y-a)}{t}<1$, while in the other cases we are able to  provide longer expansions of  the error term.

Studying the behaviour of heat kernels  (for second-order differential operators) is of primary importance in many areas of the mathematical analysis, especially in the theory of partial differential equations, harmonic analysis, as well as in the potential theory. However,  even for Laplacian, the problem of providing  precise description of the behaviour of the corresponding Dirichlet heat kernels is very difficult. Exact formulas, in terms of elementary functions, of these heat kernels for Laplace operator are available only in a few special cases (e.g. half--line, $\R^d$ or half--space). As a consequence, providing estimates of the heat kernels is one of the most important approach to describe behaviour of these objects. Some classical results involving short--time estimates of the Dirichlet heat kernel for Laplacian  were derived by S. R. S. Varadhan in \cite{Var1} and \cite{Var2}. It is also worth mentioning that  estimates of heat kernels for Laplace operator in bounded $C^{1,1}$ domains were proved by E. B. Davies in \cite{Davies:1987} (upper bound) and Q. S. Zhang in \cite{Zhang:2002}  (lower bound) -- see also \cite{Zhang2} for the case of domains with bounded complements. These results are only qualitatively sharp, which means that exponential terms in upper and lower bound are different.  Notice that estimates of the  Dirichlet heat kernels for Laplacian in a ball, with the same exponents in upper and lower bounds (so-called sharp estimates), were obtained very recently by J. Ma\l{}ecki and G. Serafin in \cite{MS:2017}.  For more information (including more general second--order  differential operators and domains e.g. manifolds) in this topic we refer the Reader to the monographs \cite{Davies:1990}, \cite{GSC} and references therein.

From the probabilistic point of view,  $p_a^{(\mu)}(t,x,y)$ is a transition probability density of the Bessel process with index $\mu\in\R$ starting from $x>a$ and killed upon leaving a half--line $(a,\infty), \ a>0$. In this context, results in this article are a continuation of the research related to the first hitting times at a given level by the Bessel process. Namely, sharp estimates of the density of these hitting times were obtained recently in \cite{BMR3:2013} and \cite{HM:2013a}. The next natural step 
 was to describe the transition probability density $p_a^{(\mu)}(t,x,y)$. T. Takemura in \cite{Takemura} derived the integral representations involving highly oscillating functions. Then, in papers \cite{BogusMalecki:POTA} and \cite{BogusMalecki:MN}, 
sharp  estimates of $p_a^{(\mu)}(t,x,y)$ in a full range of the variables $x,y>a$ and $t>0$ were obtained. In this  context, providing asymptotic expansion of the considered Bessel heat kernels is a natural improvement of these results. It is worth noting that the analogous expansions for the density of the first hitting time of Bessel processes were derived recently  in \cite{Uchiyama:2015} and \cite{Serafin:2017}.

 To compare the main results  of this paper with the known ones for the heat kernels of the Bessel operator in half--line,  let us invoke the estimates from   \cite{BogusMalecki:POTA,BogusMalecki:MN} related to the case $xy/t\rightarrow\infty$. In particular,  it was shown that for $\mu\in\R$ we have
\formula[POTA:MN]{
p_a^{(\mu)}(t,x,y)\stackrel{\mu}{\approx} 
\left(1\wedge\frac{(x-a)(y-a)}{t}\right)\frac{(xy)^{-\mu-1/2}}{\sqrt{t}}\expa,
}
whenever $x,y>a, \ t>0$ and $xy\geq t$. Here $f\wedge g$ denotes the minimum of $f$ and $g$. In fact, it should be stressed that the estimates  of $p_a^{(\mu)}(t,x,y)$ given in \cite{BogusMalecki:POTA} and \cite{BogusMalecki:MN} are valid for the whole range of the parameters $x,y>a,\ t>0$ and $\mu\in\R$. However, our results are more  precise (but on limited range of parameters)  and the estimate \eqref{POTA:MN} for $xy/t$ large enough is a consequence of those given in Theorem 1. To make it clearer, let us point out the main differences between above mentioned results. At first, notice that the leading term given in \eqref{POTA:MN}, i.e.
 $$
 \frac{(xy)^{-\mu-1/2}}{\sqrt{t}}\expa\/,
 $$
  is of a correct asymptotic order,  but it is not of the exact
asymptotic form.  Secondly, the function 
$$
 1-\exp{\left(-\frac{2(x-a)(y-a)}{t}\right)}
 $$
describes the behaviour of the heat kernel near the boundary (the level $a>0$) more precise  than the expression $1\wedge\frac{(x-a)(y-a)}{t}$ from \eqref{POTA:MN}. This difference is the most apparent when $(x-a)(y-a)/t$ is small due to the expansion of $z\mapsto 1-e^{-z}$ near zero. Finally, our third remark is that, in opposite to \eqref{POTA:MN}, Theorem 1 contains estimates of the error term.  This accuracy strongly depends  (in the case $xy/t\rightarrow\infty$) on the r\'egime of the time parameter (see \eqref{general:bound}). As it was already mentioned, under stronger assumptions on the space--variables $x,y$, even more precise bounds can be obtained -- see Theorem 2.

The paper is organized as follows. In Preliminaries 	basic notation and known results on modified Bessel functions as well as on Bessel processes are introduced. The next two sections are devoted to the proofs of  Theorems 1--2.  In particular, Section 3 contains short--time estimates of $p_a^{(\mu)}(t,x,y)$ given in Theorem 1 (see also Proposition 1), while the long--time ones are provided in series of the Propositions 2--5 in Section 4.  Finally, in Appendix,  some useful lemmas 
  are collected.

%%%%%%%%%%%%%%%%%%%%%%%%%%%%%%%%%%%%%%%%%%%%%%%%%%%%%%%%%
%%%%%%%%%%%%%%%%%%%%%%%%%%%%%%%%%%%%%%%%%%%%%%%%%%%%%%%%%
%%																			%%
%%																			%%
%%							PRELIMINARIES 		NOTATION							%%
%%																			%%
%%																			%%
%%%%%%%%%%%%%%%%%%%%%%%%%%%%%%%%%%%%%%%%%%%%%%%%%%%%%%%%%
%%%%%%%%%%%%%%%%%%%%%%%%%%%%%%%%%%%%%%%%%%%%%%%%%%%%%%%%%

\section{Preliminaries}
\label{section:preliminaries}
\subsection{Notation.\\}

\indent The symbol $f\stackrel{\mu}{\lesssim} g$ means that there exists a constant $C=C(\mu)>0$, depending on $\mu$, such that $f\leq Cg$ for all indicated arguments. Moreover, if the conditions $f\stackrel{\mu}{\lesssim}g$ and $g\stackrel{\mu}{\lesssim} f$ are simultaneously satisfied, then we write $f\stackrel{\mu}{\approx}g$. The notation $f=O_{\mu}(g)$ denotes $|f|\stackrel{\mu}{\lesssim} g$.  However, in proofs we omit writing $\mu$ over the signs $\lesssim$ and $\approx$. Similarly, the dependence of the constants on the parameters is not indicated in proofs throughout this article. To simplify the notation, we introduce the function
\formula[def:f]{
 f_{A,B,t}(s)=\frac{1}{\sqrt{s^3}}\frac{1}{\sqrt{t-s}}\exp\left(-\frac{A}{s}\right)\exp\left(-\frac{B}{t-s}\right)
}
where $t>0$ and the coefficients $A, B$ are given by
\formula{
A:=\frac{(x-1)^2}{2}, \quad B:=\frac{(y-1)^2}{2}, \quad x,y>1.
}
Moreover,  the following function (see Theorem 1)
\formula{
g^{(\mu)}(t,x,y)=\zabity,
}
where $x,y>1, \  t>0$ and $\mu\geq0$, will be used in the paper.

%%%%%%%%%%%%%%%%%%%%%%%%%%%%%%%%%%%%%%%%%%%%%%%%%%%%%%%%%
%%%%%%%%%%%%%%%%%%%%%%%%%%%%%%%%%%%%%%%%%%%%%%%%%%%%%%%%%
%%																			%%
%%																			%%
%%						PRELIMIARIES 		MODIFIED BESSEL FUCNTION					%%
%%																			%%
%%																			%%
%%%%%%%%%%%%%%%%%%%%%%%%%%%%%%%%%%%%%%%%%%%%%%%%%%%%%%%%%
%%%%%%%%%%%%%%%%%%%%%%%%%%%%%%%%%%%%%%%%%%%%%%%%%%%%%%%%%

\subsection{Modified Bessel function.\\}

\textit{The modified Bessel functions of the first} and \textit{second kind} are denoted by  $I_{\mu}(z)$ and $K_{\mu}(z)$, respectively, and defined as follows
\formula{
I_{\mu}(z)&=\sum_{k=0}^{\infty}\frac{1}{k!\Gamma(\mu+k+1)}\left(\frac{z}{2}\right)^{2k+\mu}, \quad  z>0, \quad \mu>-1, \ \\
K_{\mu}(z)&=\frac{\pi}{2}\frac{I_{-\mu}(z)-I_{\mu}(z)}{\sin(\pi \mu)}, \quad z>0,
}
for $\mu\not\in\mathbf{N}$; otherwise we take a limit when $\mu\to n\in\N$ of the last expression.
Their asymptotic behaviour at infinity can be described as follows (see 9.7.1 and 9.7.2 in \cite{AbramowitzStegun:1972})
\formula[I:infinity]{
 I_{\mu}(z)&=\frac{\exp{(z)}}{\sqrt{2\pi z}}\left[1+\sum_{k=1}^{n-1} c_k^{(\mu)}\left(-\frac{1}{z}\right)^{k}+O_{\mu}\left(\frac{1}{z}\right)^{n}\right], \ n\geq1, \quad z\rightarrow +\infty,\\
 K_{\mu}(z)&=\sqrt{\frac{2}{\pi z}}\exp{(-z)}\left[1+O_{\mu}\left(\frac{1}{z}\right)\right], \quad z\rightarrow+\infty  \label{K:infinity},
 }
where the coefficients $c_k^{(\mu)}$ are defined in \eqref{ckmu}. On the other hand, the asymptotic behaviour of $I_{\mu}(z)$ at zero is given by (see 9.6.7 in \cite{AbramowitzStegun:1972})
 \formula[I:zero]{
 I_{\mu}(z)=\frac{z^{\mu}}{2^{\mu}\Gamma(1+\mu)}\left[1+O_{\mu}(z^2)\right], \quad z\rightarrow 0^+.
 }
 
 Finally, we recall the following two--sided bounds for the ratio of  the modified Bessel function of the first kind, provided by A. Laforgia in \cite{Laforgia:1991}
 \formula[MBF:ineq:upper]{
\left(\frac{x}{y}\right)^{\mu}e^{y-x}\leq\frac{I_\mu(y)}{I_\mu(x)}<\left(\frac{y}{x}\right)^{\mu}e^{y-x}\/,\quad y\geq x>0.\/
}
Here the upper bound holds for $\mu>-1/2$ , while the lower estimate is true for $\mu\geq 1/2$. Moreover, the inequality $I_{\mu}(x)\leq I_{\mu}(y)$ is valid whenever $\mu\geq0$ and $y\geq x>0$ (see formula 8.445 in \cite{bib:Gradshteyn Ryzhlik}).

%%%%%%%%%%%%%%%%%%%%%%%%%%%%%%%%%%%%%%%%%%%%%%%%%%%%%%%%%
%%%%%%%%%%%%%%%%%%%%%%%%%%%%%%%%%%%%%%%%%%%%%%%%%%%%%%%%%
%%																			%%
%%																			%%
%%							PRELIMINARIES 		BESSEL PROCESSES					%%
%%																			%%
%%																			%%
%%%%%%%%%%%%%%%%%%%%%%%%%%%%%%%%%%%%%%%%%%%%%%%%%%%%%%%%%
%%%%%%%%%%%%%%%%%%%%%%%%%%%%%%%%%%%%%%%%%%%%%%%%%%%%%%%%%

\subsection{Bessel processes.\\}

We work on the canonical path space $C([0,\infty),\R)$, equipped with the filtration $\{\mathcal{F}_t\}_{t\geq0}$  generated by trajectories up to time $t$. Then, for a given trajectory $R\in C([0,\infty),\R)$, the first hitting time $T_a$ at given level $a>0$ is defined as
$$
T_a=\inf\{t>0: R_t=a\}.
$$
Moreover, we define \textit{the Bessel process} $BES^{(\mu)}(x)$, with index $\mu\in\R$ and starting point $x>0$, as a one--dimensional diffusion on $[0,\infty)$ with infinitesimal generator
$$\frac{1}{2}L^{(\mu)}=\frac{1}{2}\frac{d^2}{dx^2}+\frac{2\mu+1}{2x}\frac{d}{dx}, \quad x>0,$$
where the killing condition  at zero for $\mu<0$ is imposed. The probability law and the corresponding expected value of the Bessel process  $BES^{(\mu)}(x)$ are denoted  by $\mathbf{P}_x^{(\mu)}$ and $\mathbf{E}^{(\mu)}_x$, respectively. Notice that the laws of Bessel processes with different indices are absolutely continuous and their Radon--Nikodym derivative is given by
\formula[ac:formula]{
\left.\frac{d\pr^{(\mu)}_x}{d\pr^{(\nu)}_x}\right|_{\mathcal{F}_t}=\left(\frac{w(t)}{x}\right)^{\mu-\nu}\exp\left(-\frac{\mu^2-\nu^2}{2}\int_{0}^{t}\frac{ds}{w^{2}(s)}\right)\/, 
}
where the equality holds $\pr^{(\nu)}_x$--a.s. on $\{T_0>t\}$.

The density of  $T_a$ for the Bessel process $BES^{(\mu)}(x)$ is denoted by
$$
q_{x,a}^{(\mu)}(s)=\frac{\mathbf{P}_x^{(\mu)}(T_a\in ds)}{ds}, \quad x>a, \ s>0.
$$
For $a=1$ we write simply $q_x^{(\mu)}(s)$, where $x>1$ and $s>0$. Sharp estimates of this function were obtained in \cite{BMR3:2013}, i.e. for $x>1$ and $s>0$ we have
\formula[eq:density:T1:general]{
q_x^{(\mu)}(s)\approx (x-1)\left(1\wedge \frac{1}{x^{2\mu}}\right)\frac{\exp{\left(-\frac{(x-1)^2}{2s}\right)}}{s^{3/2}}\frac{x^{2|\mu|-1}}{(s+x)^{|\mu|-1/2}}, \quad \mu\neq 0,
}
and
\formula[eq:density:T1:general:zero]{
q_x^{(0)}(s)\approx \frac{x-1}{x}\frac{\exp{\left(-\frac{(x-1)^2}{2s}\right)}}{s^{3/2}}\frac{1+\ln x}{1+\ln (s+x)}\frac{(x+s)^{1/2}}{1+\ln (1+s/x)}.
}
However, some more precise estimates of  $q_x^{(\mu)}(s)$ are also available. In  particular, the following asymptotic expansion (see Theorem B in \cite{Uchiyama:BM}) will be used in our article
  \formula[eq:density:T1:3]{
 q_x^{(\mu)}(t)&=\frac{x-1}{\sqrt{2\pi s^3}}x^{-\mu-1/2}\exp{\left(-\frac{(x-1)^2}{2s}\right)}\left(1+O_{\mu}\left(\frac{s}{x}\right)\right), \ x>1, \  t>0.
  }
 This result can be strengthened  by giving more precise estimates of the error term for $0\leq\mu<1/2$ (see Lemma 4 in \cite{BMR3:2013}), namely we have
 \formula[eq:density:T1:2]{
q_x^{(\mu)}(s)=\frac{x-1}{\sqrt{2\pi s^3}}x^{-\mu-1/2}\exp{\left(-\frac{(x-1)^2}{2s}\right)}\left(1+E_{\mu}(s,x)\right),
}
where $0\leq E_{\mu}(s,x)\leq \frac{1-4\mu^2}{8}\frac{s}{x}$ for $x>1, \ s>0$. 
Some improvement is available also in the case $\mu\geq 1/2$, i.e. Lemma 1 in \cite{BogusMalecki:POTA} states that
\formula[eq:density:T1:1]{
q_x^{(\mu)}(s)\leq \frac{x-1}{\sqrt{2\pi s^3}}x^{-\mu-1/2}\exp{\left(-\frac{(x-1)^2}{2s}\right)},
}
where $\mu\geq 1/2$ and $x>1, \ s>0$.

Recall now that $m^{(\mu)}(dy)=m^{(\mu)}(y)dy=y^{2\mu+1}dy$. 
The crucial  fact is that the considered Bessel heat kernel (for $a=0$) is the transition probability density $p^{(\mu)}(t,x,y)$ of the Bessel process $BES^{(\mu)}(x)$
\formula{
p^{(\mu)}(t,x,y)=\frac{\mathbf{E}^{(\mu)}_x [R_t\in dy]}{m^{(\mu)}(dy)}, \quad x,y>0, \quad t>0.
}
 Moreover, this function can be expressed in terms of the modified Bessel function of the first kind in the following way
\formula[eq:pdf]{
p^{(\mu)}(t,x,y) :=p_0^{(\mu)}(t,x,y)= \frac{(xy)^{-\mu}}{t}\exp\left(-\frac{x^2+y^2}{2t}\right)I_{|\mu|}\left(\frac{xy}{t}\right)\/,\quad
x,y> 0, \quad t>0\/.
}
In particular, using \eqref{I:infinity}, one can write
 \formula[pmu:infinity]{
 p^{(\mu)}(t,x,y)=\wolny\left[1+\sum_{k=1}^{n-1} c_k^{(\mu)}\left(-\frac{t}{xy}\right)^{k}+O_{\mu}\left(\frac{t}{xy}\right)^n\right],
 }
 where $x,y>0, \  t>0$ and $xy/t\rightarrow\infty$.
On the other hand,  in the case $a>0$ the considered Bessel heat kernel is the transition probability density of the Bessel process starting from $x>a$ and killed upon leaving a half--line $(a,\infty)$
\formula{
p_a^{(\mu)}(t,x,y)=\frac{\mathbf{E}_x^{(\mu)}[t<T_a; R_t\in dy]}{m^{(\mu)}(dy)}, \quad x,y>a, \quad t>0.
}
We are able to represent the considered heat kernel  $p_a^{(\mu)}(t,x,y)$ in terms of the above mentioned densities $p^{(\mu)}(t,x,y)$ and $q_x^{(\mu)}(s)$, namely it holds
\formula[eq:hunt:formula]{
p_a^{(\mu)}(t,x,y) & = p^{(\mu)}(t,x,y) - r_a^{(\mu)}(t,x,y)\notag\\
&= p^{(\mu)}(t,x,y)- \int_0^t p^{(\mu)}(t-s,a,y)q_{x,a}^{(\mu)}(s)ds\/.
}
In fact, the last formula is a starting point for deriving estimates of the Bessel heat kernels. Notice that putting $\mu=-\nu\geq0$ in \eqref{ac:formula} gives 
\formula[p1:negtive]{
p_a^{(\mu)}(t,x,y)=(xy)^{-2\mu}p_a^{(-\mu)}(t,x,y), \quad x,y>a, \quad t>0,
}
which shows us that it is enough to consider indices $\mu\geq0$. Moreover, we can focus only on the case $a=1$ due to the following scaling property
\formula[p1:scalling]{
p_a^{(\mu)}(t,x,y)=\frac{1}{a}p_1^{(\mu)}(t/a^2,x/a,y/a), \quad x,y>a>0, \ t>0.
}
Finally, let us recall the well--known results in the case $\mu=1/2$ (see formulas 2.13,  2.16 and 2.17 in \cite{BogusMalecki:POTA})
\formula[case:mu12:q]{
q_{x,a}^{(1/2)}(s)&=\frac{x-a}{x}\frac{1}{\sqrt{2\pi s^3}}\exp{\left(-\frac{(x-a)^2}{2s}\right)}
,\\
\label{case:mu12:r1}r_a^{(1/2)}(t,x,y)&=\frac{1}{\sqrt{2\pi t}}\frac{1}{xy}\left(\exp{\left(-\frac{(x+y-2a)^2}{2t}\right)}-\exp{\left(-\frac{(x+y)^2}{2t}\right)}\right), \\
\label{case:mu12:p1}p_a^{(1/2)}(t,x,y)&=\frac{1}{\sqrt{2\pi t}}\frac{1}{xy}\left(\expa-\exp{\left(-\frac{(x+y-2a)^2}{2t}\right)}\right),
}
where $x,y>a>0$ and $t>0$. Observe that the function $p_1^{(1/2)}(t,x,y)$ appears in the expansion of $p_1^{(\mu)}(t,x,y)$ given in the main results of the paper (see Theorems 1--2). Therefore, we will often refer to the formula  \eqref{case:mu12:p1}, especially in proofs of propositions in the next two sections. 

%%%%%%%%%%%%%%%%%%%%%%%%%%%%%%%%%%%%%%%%%%%%%%%%%%%%%%%%%
%%%%%%%%%%%%%%%%%%%%%%%%%%%%%%%%%%%%%%%%%%%%%%%%%%%%%%%%%
%%																			%%
%%																			%%
%%							SHORT-TIME ESTIMATES									%%
%%																			%%
%%																			%%
%%%%%%%%%%%%%%%%%%%%%%%%%%%%%%%%%%%%%%%%%%%%%%%%%%%%%%%%%
%%%%%%%%%%%%%%%%%%%%%%%%%%%%%%%%%%%%%%%%%%%%%%%%%%%%%%%%%

\section{Short--time estimates.}
As it was already mentioned, it is enough to find estimates of $p_a^{(\mu)}(t,x,y)$ only for $a=1$ and $1<x<y$ (see \eqref{p1:scalling} and \eqref{p1:symmetric}). Moreover, due to \eqref{p1:negtive}, one can assume that $\mu\geq 0$. 
We begin with the part of Theorem 1 related to the case $0<t<t_0(\mu)$. 

%%%%%%%%%%%%%%%%%%%%%%%%%%%%%%%%%%%%%%%%%%%%%%%%%%%%%%%%%
%%%%%%%%%%%%%%%%%%%%%%%%%%%%%%%%%%%%%%%%%%%%%%%%%%%%%%%%%
%%																			%%
%%																			%%
%%							SHORT-TIME ESTIMATES 		PROPOSITION 1					%%
%%																			%%
%%																			%%
%%%%%%%%%%%%%%%%%%%%%%%%%%%%%%%%%%%%%%%%%%%%%%%%%%%%%%%%%
%%%%%%%%%%%%%%%%%%%%%%%%%%%%%%%%%%%%%%%%%%%%%%%%%%%%%%%%%

\begin{proposition}
For $\mu\in\R$ we have
\formula{
p_1^{(\mu)}(t,x,y)=g^{(\mu)}(t,x,y)[1+O_{\mu}(t)],
}
where $x,y>1$ and $0<t<t_0(\mu)$.
\end{proposition}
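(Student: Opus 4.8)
The plan is to remove the index $\mu$ in favour of the value $\mu=1/2$ by means of the absolute--continuity relation \eqref{ac:formula}; this rewrites the ratio $p_1^{(\mu)}/g^{(\mu)}$ as the expectation of an exponential functional that is manifestly within $O_{\mu}(t)$ of $1$ in the r\'egime $0<t<t_0(\mu)$.

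By \eqref{p1:negtive} we may assume $\mu\geq 0$. Comparing the definition of $g^{(\mu)}$ with the explicit formula \eqref{case:mu12:p1} taken at $a=1$ gives the identity $g^{(\mu)}(t,x,y)=(xy)^{1/2-\mu}p_1^{(1/2)}(t,x,y)$, so the assertion is equivalent to $p_1^{(\mu)}(t,x,y)=(xy)^{1/2-\mu}p_1^{(1/2)}(t,x,y)\bigl(1+O_{\mu}(t)\bigr)$. To obtain this I would begin from $p_1^{(\mu)}(t,x,y)\,m^{(\mu)}(dy)=\eb[t<T_1;\,R_t\in dy]$, apply \eqref{ac:formula} with $\nu=1/2$ -- which is legitimate on $\{t<T_1\}$, since this event is contained in $\{t<T_0\}$ (a continuous path issued from $x>1$ must hit $1$ before reaching $0$) -- and pull the deterministic factor $(R_t/x)^{\mu-1/2}=(y/x)^{\mu-1/2}$ out of the expectation on $\{R_t=y\}$. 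Disintegrating the remaining $\mathbf{P}^{(1/2)}_x$--expectation of $\mathbf{1}_{\{t<T_1\}}\exp\bigl(-\tfrac{4\mu^{2}-1}{8}\int_0^t R_s^{-2}\,ds\bigr)$ over $\{R_t\in dy\}$ through the bridge $\widetilde{\mathbf{E}}_{x\to y}$ of the $(1/2)$--Bessel process \emph{killed at the level $1$} (so that $\mathbf{E}^{(1/2)}_x[\mathbf{1}_{\{t<T_1\}}F;R_t\in dy]=p_1^{(1/2)}(t,x,y)\,m^{(1/2)}(dy)\,\widetilde{\mathbf{E}}_{x\to y}[F]$ for bounded $\mathcal{F}_t$--measurable $F$), and cancelling $m^{(1/2)}(dy)/m^{(\mu)}(dy)=y^{1-2\mu}$ against $(y/x)^{\mu-1/2}$ so that the prefactor collapses to $(xy)^{1/2-\mu}$, I would arrive at the exact identity
\formula{
p_1^{(\mu)}(t,x,y)=g^{(\mu)}(t,x,y)\,\widetilde{\mathbf{E}}_{x\to y}\!\left[\exp\!\left(-\frac{4\mu^{2}-1}{8}\int_0^t\frac{ds}{R_s^{2}}\right)\right],
}
in which $\tfrac{4\mu^{2}-1}{8}=c_1^{(\mu)}$ (see \eqref{ckmu}).

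It remains to estimate this expectation. Under $\widetilde{\mathbf{E}}_{x\to y}$ the trajectory stays in $(1,\infty)$ for every $s\in[0,t]$, hence $0\leq\int_0^t R_s^{-2}\,ds\leq t$ and the exponent is at most $\tfrac{|4\mu^{2}-1|}{8}\,t$ in absolute value; by the very definition of $t_0(\mu)$ in \eqref{tzero} this does not exceed $1$ whenever $0<t<t_0(\mu)$ (indeed it is $\leq\tfrac{1}{8}$ for $|\mu|\leq 1/2$, and $<\tfrac{4\mu^{2}-1}{8}\,t_0(\mu)=1$ for $|\mu|>1/2$). Therefore, pathwise,
\formula{
\left|\exp\!\left(-\frac{4\mu^{2}-1}{8}\int_0^t\frac{ds}{R_s^{2}}\right)-1\right|\leq e\cdot\frac{|4\mu^{2}-1|}{8}\,t\stackrel{\mu}{\lesssim}t,
}
and taking $\widetilde{\mathbf{E}}_{x\to y}$ of both sides gives $p_1^{(\mu)}(t,x,y)=g^{(\mu)}(t,x,y)(1+O_{\mu}(t))$. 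The one place that demands care is the measure--theoretic manipulation above: verifying that \eqref{ac:formula} may be invoked on $\{t<T_1\}$ and that the bridge of the killed $(1/2)$--Bessel process is carried by trajectories remaining above the level $1$ -- which is exactly what forces $\int_0^t R_s^{-2}\,ds\leq t$ and thus the boundedness of the functional; the remaining arithmetic, including the calibration $\tfrac{|4\mu^{2}-1|}{8}\,t_0(\mu)\leq 1$, is routine.
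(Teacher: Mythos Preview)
Your proposal is correct and follows essentially the same route as the paper: use the absolute--continuity relation \eqref{ac:formula} with reference index $\nu=1/2$, then exploit $R_s>1$ on $\{t<T_1\}$ to get $0\leq\int_0^t R_s^{-2}\,ds\leq t$ and hence that the exponential functional is within $O_\mu(t)$ of $1$ on the range $0<t<t_0(\mu)$. The only difference is packaging: the paper bounds the exponential by the constants $\exp\bigl(-\tfrac{\mu^2-1/4}{2}t\bigr)$ and $1$ \emph{before} passing to densities (via arbitrariness of a Borel set $A$), which sidesteps the need to introduce the killed $(1/2)$--Bessel bridge $\widetilde{\mathbf{E}}_{x\to y}$ altogether and is therefore a touch more elementary.
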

\begin{proof}
Let $A\subset(1,\infty)$ be a Borel set. We apply the absolute continuity of the laws of Bessel processes \eqref{ac:formula} to write
for $\mu\geq\nu\geq0$
\formula{
\eb[t<T_1,R_t\in A]=\mathbf{E}^{(\nu)}_x\left[t<T_1, R_t\in A;\left(\frac{R_t}{x}\right)^{\mu-\nu}\exp{\left(-\frac{\mu^2-\nu^2}{2}\int_0^t \frac{ds}{R^2(s)}\right)}\right].
}
Notice that  it holds $0\leq \int_0^t \frac{ds}{R^2(s)}\leq\int_0^t ds=t$ on the set $\{t<T_1\}$, which implies 
\formula{
\exp{\left(-\frac{\mu^2-\nu^2}{2}t \right)}\leq\exp{\left(-\frac{\mu^2-\nu^2}{2} \int_0^t \frac{ds}{R^2(s)}\right)}\leq1.
}
Since $\eb[t<T_1,R_t\in A]=\int\limits_A p_1^{(\mu)}(t,x,y)m^{(\mu)}(y)dy$, the set A is given arbitrary and $m^{(\mu)}(y)=y^{2\mu+1}$, we get
\formula{
\exp{\left(-\frac{\mu^2-\nu^2}{2}t \right)}(xy)^{\nu-\mu}p_1^{(\nu)}(t,x,y)\leq p_1^{(\mu)}(t,x,y)\leq (xy)^{\nu-\mu}p_1^{(\nu)}(t,x,y),
}
where $x,y>1$ and $t>0$. Putting $\mu\geq\nu=1/2$ and estimating the above-given exponent from below by $1-\frac{\mu^2-1/4}{2}t$, we obtain
\formula[greater12]{
 \left(1-\frac{\mu^2-1/4}{2}t\right)(xy)^{1/2-\mu} p_1^{(1/2)}(t,x,y)\leq p_1^{(\mu)}(t,x,y)\leq   p_1^{(1/2)}(t,x,y)\/,
}
while for $\mu=1/2\geq\nu\geq0$ it holds
\formula[smaller12]
{
(xy)^{1/2-\nu}p_1^{(1/2)}(t,x,y)\leq p_1^{(\nu)}(t,x,y)\leq \left(1+C_1^{(\nu)}t\right)(xy)^{1/2-\nu}p_1^{(1/2)}(t,x,y).
}
Existence of the constant $C_1^{(\nu)}>0$ is justified by the behaviour of $t\mapsto\exp\left(\frac{\mu^2-\nu^2}{2}t\right)$ for $0<t<1$. To end the proof, it remains to apply the explicit formula for $p_1^{(1/2)}(t,x,y)$,  given in \eqref{case:mu12:p1}, to the inequalities \eqref{greater12} and \eqref{smaller12}.
\end{proof}

%%%%%%%%%%%%%%%%%%%%%%%%%%%%%%%%%%%%%%%%%%%%%%%%%%%%%%%%%
%%%%%%%%%%%%%%%%%%%%%%%%%%%%%%%%%%%%%%%%%%%%%%%%%%%%%%%%%
%%																			%%
%%																			%%	
%%								LONG-TIME ESTIMATES								%%
%%																			%%
%%																			%%
%%%%%%%%%%%%%%%%%%%%%%%%%%%%%%%%%%%%%%%%%%%%%%%%%%%%%%%%%
%%%%%%%%%%%%%%%%%%%%%%%%%%%%%%%%%%%%%%%%%%%%%%%%%%%%%%%%%

\section{Long--time estimates.}

In this section we provide estimates of the Bessel heat kernel $p_a^{(\mu)}(t,x,y)$ for large times $t$. As previously,  we will assume that $a=1, \ 1<x<y$ and $\mu\geq0$. Moreover, as it was  mentioned in Introduction, we need to investigate only two cases: $x,y>2$ and $1<x<2<y$. We start with the first one. In fact, using below given Proposition 2,  the part (i) of Theorem 2 is also proved.

%%%%%%%%%%%%%%%%%%%%%%%%%%%%%%%%%%%%%%%%%%%%%%%%%%%%%%%%%
%%%%%%%%%%%%%%%%%%%%%%%%%%%%%%%%%%%%%%%%%%%%%%%%%%%%%%%%%
%%																			%%
%%																			%%
%%						LONG-TIME ESTIMATES 		PROPOSITION 2						%%
%%																			%%
%%																			%%
%%%%%%%%%%%%%%%%%%%%%%%%%%%%%%%%%%%%%%%%%%%%%%%%%%%%%%%%%
%%%%%%%%%%%%%%%%%%%%%%%%%%%%%%%%%%%%%%%%%%%%%%%%%%%%%%%%%

\begin{proposition}
Let $\mu\geq0$  be fixed. Then we have for $x,y>2$ and $t>0$
\formula{
p_1^{(\mu)}(t,x,y)=p^{(\mu)}(t,x,y)\left[1+O_{\mu}\left(\exp{\left(-\frac{xy}{4t}\right)}\right)\right],
}
whenever  $xy/t\rightarrow\infty$.
\end{proposition}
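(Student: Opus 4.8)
The plan is to start from the Hunt-type decomposition \eqref{eq:hunt:formula}, which gives
\[
p_1^{(\mu)}(t,x,y)=p^{(\mu)}(t,x,y)\left(1-\frac{r_1^{(\mu)}(t,x,y)}{p^{(\mu)}(t,x,y)}\right),
\]
so that the claimed identity holds with $E_\mu$-type error equal to $-r_1^{(\mu)}(t,x,y)/p^{(\mu)}(t,x,y)$, and the whole task reduces to showing
\[
0\le \frac{r_1^{(\mu)}(t,x,y)}{p^{(\mu)}(t,x,y)}\stackrel{\mu}{\lesssim}\exp\!\left(-\frac{xy}{4t}\right)
\]
for $x,y>2$ when $xy/t\to\infty$. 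The numerator is $r_1^{(\mu)}(t,x,y)=\int_0^t p^{(\mu)}(t-s,1,y)\,q_x^{(\mu)}(s)\,ds$; positivity of the ratio is immediate since $r_1^{(\mu)}\ge 0$. For the upper bound I would bound the two factors in the integrand separately using the sharp results recalled in Preliminaries: the upper estimate on $p^{(\mu)}(t-s,1,y)$ coming from \eqref{eq:pdf} together with the ratio inequality \eqref{MBF:ineq:upper} (or directly the known sharp estimate \eqref{POTA:MN} / the two-sided bound on $p^{(\mu)}$), and the upper bound on $q_x^{(\mu)}(s)$ from \eqref{eq:density:T1:general}–\eqref{eq:density:T1:general:zero} (or the cleaner \eqref{eq:density:T1:3}, \eqref{eq:density:T1:1} for $\mu\ge 1/2$ and \eqref{eq:density:T1:2} for $0\le\mu<1/2$). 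This converts $r_1^{(\mu)}$ into an integral comparable to a convolution of Gaussian-type kernels, which after collecting the Gaussian exponents takes essentially the form $\int_0^t f_{A,B,t}(s)\,ds$ with $A=(x-1)^2/2$ and (since one space variable is set to $1$) the ``$y$-leg'' having Gaussian weight $\exp(-(y-1)^2/(2(t-s)))$.

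The core of the estimate is the elementary but decisive inequality for the sum of the two Gaussian exponents: for $0<s<t$,
\[
\frac{(x-1)^2}{2s}+\frac{(y-1)^2}{2(t-s)}\ \ge\ \frac{\bigl((x-1)+(y-1)\bigr)^2}{2t}=\frac{(x+y-2)^2}{2t},
\]
by the Cauchy–Schwarz / convexity bound $\tfrac{a^2}{s}+\tfrac{b^2}{t-s}\ge\tfrac{(a+b)^2}{t}$. Thus the whole integral is dominated by $\exp(-(x+y-2)^2/(2t))$ times a polynomially-controlled factor (the $s^{-3/2}(t-s)^{-1/2}$ and power corrections integrate to something comparable to $t^{-1}$ times polynomial factors in $x,y,t$, cf.\ Lemma in the Appendix), while the denominator $p^{(\mu)}(t,x,y)$ is comparable, via \eqref{pmu:infinity}, to $(xy)^{-\mu-1/2}t^{-1/2}\exp(-(x-y)^2/(2t))$. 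Taking the ratio, the Gaussian part contributes
\[
\exp\!\left(-\frac{(x+y-2)^2}{2t}+\frac{(x-y)^2}{2t}\right)=\exp\!\left(-\frac{2(x-1)(y-1)}{t}\right),
\]
and since $x,y>2$ we have $(x-1)(y-1)\ge \tfrac{1}{4}xy$ (indeed $(x-1)/x\ge 1/2$ and likewise for $y$), so this is $\le \exp(-xy/(2t))\le\exp(-xy/(4t))$. The leftover algebraic factors — powers of $x,y,t$ and the $\mathbf 1$-type restrictions from the $q_x^{(\mu)}$ estimates — are, on the region $x,y>2$ and $xy/t$ large, all absorbed into the constant or into a further fraction of the exponential $\exp(-xy/(4t))$; this is exactly the kind of ``polynomial times exponential'' bookkeeping that the regime $xy/t\to\infty$ is designed to kill, and I would handle the $\mu=0$ logarithmic corrections of \eqref{eq:density:T1:general:zero} the same way.

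The main obstacle I anticipate is not the Gaussian exponent — that is the clean Cauchy–Schwarz step above — but the careful handling of the subexponential prefactors in the $q_x^{(\mu)}(s)$ and $p^{(\mu)}(t-s,1,y)$ estimates uniformly in $s\in(0,t)$, since these prefactors involve ratios like $x^{2|\mu|-1}/(s+x)^{|\mu|-1/2}$ (and logarithms when $\mu=0$) that behave differently for small and large $s$; one must split the $s$-integral accordingly, or use the cleaner uniform bounds \eqref{eq:density:T1:3}/\eqref{eq:density:T1:1}/\eqref{eq:density:T1:2}, and then verify that after dividing by $p^{(\mu)}(t,x,y)$ and using $x,y>2$ together with $xy/t\to\infty$ everything collapses to $O_\mu(\exp(-xy/(4t)))$. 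I expect this to be routine given the Appendix lemmas, so the proof should be short.
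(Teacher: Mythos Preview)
Your outline is the same as the paper's: Hunt formula, bound $q_x^{(\mu)}$ and $p^{(\mu)}(t-s,1,y)$ from above, reduce to an $f_{A,B,t}$-type integral, and read off the Gaussian exponent $\exp(-(x+y-2)^2/(2t))$ so that the ratio with $p^{(\mu)}(t,x,y)$ produces $\exp(-2(x-1)(y-1)/t)\le \exp(-xy/(2t))$. The Cauchy--Schwarz step you highlight is exactly what Lemma~\ref{mu12} encodes.

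The gap is in your treatment of the prefactors when $x<t$. Using the ``clean'' bound \eqref{eq:density:T1:2}/\eqref{eq:density:T1:3}, i.e.\ $q_x^{(\mu)}(s)\le \text{(leading)}\cdot(1+Cs/x)$, and integrating via Lemmas~\ref{mu12}--\ref{mu32}, the extra $Cs/x$ piece contributes a factor comparable to $t/(x+y)$ relative to the main term, so you end up with
\[
\frac{r_1^{(\mu)}}{p^{(\mu)}}\ \lesssim\ \Bigl(1+\tfrac{Ct}{x+y}\Bigr)\exp\!\Bigl(-\tfrac{xy}{2t}\Bigr).
\]
This is \emph{not} $O_\mu(\exp(-xy/(4t)))$ uniformly: take $x=y=N$ and $t=N^2/u$ with $u$ fixed large; then $xy/t=u$ while $t/(x+y)=N/(2u)\to\infty$, so $\bigl(1+Ct/(x+y)\bigr)e^{-u/4}$ is unbounded. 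The slack between $e^{-xy/(2t)}$ and $e^{-xy/(4t)}$ cannot absorb a factor that grows with $\sqrt{xy}$ independently of $xy/t$. Thus the ``polynomial times exponential'' bookkeeping you describe does not close here, and the splitting is not optional.

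The paper resolves this by first separating $x\ge t$ (where $s/x\le 1$ on the whole range, so the above bound suffices) from $x<t$; in the latter case it splits the $s$-integral at $s=x$. On $(0,x)$ the argument is as you sketch. On $(x,t)$, however, one must switch to the sharp large-$s$ behaviour $q_x^{(\mu)}(s)\lesssim s^{-\mu-1}e^{-(x-1)^2/(2s)}$ from \eqref{eq:density:T1:general} and simultaneously replace the $I_\mu(w)\lesssim w^{-1/2}e^{w}$ bound by $I_\mu(w)\lesssim w^{\mu}e^{w}$; after the substitution $w=1/s-1/t$ this produces a $K_\mu$-type integral (formula \eqref{K:integral}), whose asymptotics give back exactly $\exp(-2(x-1)(y-1)/t)$ with the right prefactors. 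That step is the missing idea in your proposal, and it is what makes the case $x<t$ more than routine.
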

\begin{proof}
Since $p_1^{(\mu)}(t,x,y)\leq p^{(\mu)}(t,x,y)$,  it is enough to focus on the proof of an appropriate lower bound for the considered heat kernel.  We begin with the case $x\geq t$. Notice that, due to the asymptotic behaviour of the modified Bessel function at zero \eqref{I:zero} and at infinity \eqref{I:infinity}, we have for $w>0$ 
\formula[I:upper:general]
{
I_{\alpha}(w)\lesssim w^{\alpha}e^w, \quad \alpha\in\{-1/2, \mu\}.
}
Combining this inequality (for $w=y/(t-s)$ and $\alpha=-1/2$) with the following bound (see \eqref{eq:density:T1:1} and \eqref{eq:density:T1:2}) 
\formula{
q_{x}^{(\mu)}(s)\leq \frac{x-1}{s^{3/2}}\frac{x^{-\mu-1/2}}{\sqrt{2\pi }}\exp{\left(-\frac{(x-1)^2}{2s}\right)}\left(1+\left(\frac{1-4\mu^2}{8}\vee0\right)\frac{s}{x}\right), \quad x>1, \quad s>0,
}
we can write
\formula{
r_1^{(\mu)}(t,x,y)&:=\int_0^t \frac{y^{-\mu}}{t-s}\exp{\left(-\frac{y^2+1}{2(t-s)}\right)}I_{\mu}\left(\frac{y}{t-s}\right)q_{x}^{(\mu)}(s)ds\\
&\lesssim (x-1)\frac{(xy)^{-\mu-1/2}}{2\pi} \int_0^t \deff\left(1+\left(\frac{1-4\mu^2}{8}\vee0\right)\frac{s}{x}\right)ds \\
&\lesssim\frac{(xy)^{-\mu-1/2}}{\sqrt{2\pi t}}\expb\left(1+\left(\frac{1-4\mu^2}{8}\vee0\right)\frac{t}{y}\right)\/.
}
 The last line is a consequence of the exact formula for $\int_0^t f_{A,B,t}(s)ds$ given in Lemma \ref{mu12} and estimates of $\int_0^t sf_{A,B,t}(s)ds$ from Lemma \ref{mu32}. Both of these lemmas can be found in Appendix. Since $xy/t\rightarrow\infty$ we can use \eqref{pmu:infinity} to justify the existence of $C_1>0$ such that
\formula{
r_1^{(\mu)}(t,x,y)\leq C_1 p^{(\mu)}(t,x,y)\expz, 
}
where the inequality $y>t$ was used (since $y>x\geq t$). Moreover, because of $x,y>2$ the last exponent can be bounded from above by $\exp{(-xy/(4t))}$. Hence, the Hunt formula \eqref{eq:hunt:formula} gives us finally
\formula{
p_1^{(\mu)}(t,x,y)\geq p^{(\mu)}(t,x,y)\left(1-C_1\exp{\left(-\frac{xy}{4t}\right)}\right).
}
 Therefore, it remains to deal with the situation when $x<t$. To achieve this, we split integral defining the expression $r_1(t, x, y)$ 
 as follows
$$r_1^{(\mu)}(t,x,y)=\left(\int_0^x+\int_x^t\right) p^{(\mu)}(t-s,1,y)q_x^{(\mu)}(s)ds=\mathcal{J}_1^{(\mu)}(t,x,y)+\mathcal{J}_2^{(\mu)}(t,x,y)$$ 
and then we estimate separately each of these integrals, starting with  $\mathcal{J}_1^{(\mu)}(t,x,y)$. 
Let $0<s<x$. Taking $\alpha=-1/2, \ w=\frac{y}{t-s}$ in \eqref{I:upper:general} and using the following inequality (see \eqref{eq:density:T1:general} and \eqref{eq:density:T1:general:zero}) 
\formula{
q_{x}^{(\mu)}(s)\lesssim \frac{x-1}{s^{3/2}}x^{-\mu-1/2}\exp{\left(-\frac{(x-1)^2}{2s}\right)}, \quad x>2, \quad 0<s<x,
}
we obtain
\formula{
\mathcal{J}_1^{(\mu)}(t,x,y)
&\lesssim (x-1) (xy)^{-\mu-1/2}\int_0^x f_{A,B,t}(s)ds\approx \frac{(xy)^{-\mu-1/2}}{\sqrt{t}}\expb.
}
The last estimate  is a consequence of the exact formula for $\int_0^t f_{a,b}(s)ds$ given in Lemma \ref{mu12} from Appendix. Since $xy/t\rightarrow\infty$ we can use \eqref{pmu:infinity} to justify the following inequalities
\formula[J1:e]{
\mathcal{J}_1^{(\mu)}(t,x,y)\lesssim  p^{(\mu)}(t,x,y)\expz\lesssim  p^{(\mu)}(t,x,y)\exp{\left(-\frac{xy}{4t}\right)},
}
where the second one is a consequence of the assumption $x,y>2$. To deal with the part $\mathcal{J}_2^{(\mu)}(t,x,y)$ we use \eqref{I:upper:general} with $w=\frac{y}{t-s}, \ \alpha=\mu$ and the following upper bound (see again \eqref{eq:density:T1:general} and \eqref{eq:density:T1:general:zero})
\formula{
q_{x}^{(\mu)}(s)\lesssim s^{-\mu-1}\exp{\left(-\frac{(x-1)^2}{2s}\right)}, \quad s>x>2.
}
Namely, we infer that
\formula{
\mathcal{J}_2^{(\mu)}(t,x,y)\lesssim \int_x^t  s^{1/2-\mu} (t-s)^{-\mu-1/2}f_{A,B,t}(s)ds.
}
Moreover, making substitution $w=\frac{1}{s}-\frac{1}{t}$, one can rewrite this inequality as follows
\formula{
\mathcal{J}_2^{(\mu)}(t,x,y)&\lesssim
e^{-\frac{a+b}{t}}t^{-3\mu-1}  \int_{0}^{\frac{1}{x}-\frac{1}{t}} w^{-\mu-1}(1+tw)^{2\mu}e^{-aw}e^{-b/(t^2w)}dw.
}
Therefore, estimating $(1+tw)^{2\mu}\leq (t/x)^{2\mu}$ for $w<\frac{1}{x}-\frac{1}{t}$ and extending the range of integration to $(0,\infty)$, provide us 
\formula{
\mathcal{J}_2^{(\mu)}(t,x,y)&\lesssim 
x^{-2\mu}e^{-\frac{a+b}{t}}t^{-\mu-1} \int_{0}^{\infty} w^{-\mu-1}e^{-aw}e^{-b/(t^2w)}dw\\
&\approx x^{-2\mu}e^{-\frac{a+b}{t}}t^{-\mu-1}\left(\frac{b}{at^2}\right)^{-\mu/2} \frac{\sqrt{t}}{(ab)^{1/4}}e^{-\frac{2\sqrt{ab}}{t}}\\
&\approx p^{(\mu)}(t,x,y) \expz .
}
Here the second  line is a consequence of \eqref{K:integral} and \eqref{K:estimate} (see Appendix), since $\frac{(x-1)(y-1)}{t}\geq\frac{xy}{2t}\rightarrow\infty$ for $x,y>2$ and $xy/t\rightarrow\infty$. On the other hand, the last one follows from asymptotic behaviour of $p^{(\mu)}(t,x,y)$ for $xy/t\rightarrow\infty$ (see \eqref{pmu:infinity}). 
Since we assumed $xy/t\rightarrow\infty$ and $2<x<y$, we get
\formula[J2:e]{
\mathcal{J}_2^{(\mu)}(t,x,y)\lesssim  p^{(\mu)}(t,x,y)  \exp{\left(-\frac{xy}{4t}\right)}.
}
Summarizing, in view of \eqref{J1:e} and \eqref{J2:e}, the usage of the Hunt formula  \eqref{eq:hunt:formula} immediately ends the proof.
\end{proof}

%%%%%%%%%%%%%%%%%%%%%%%%%%%%%%%%%%%%%%%%%%%%%%%%%%%%%%%%%
%%%%%%%%%%%%%%%%%%%%%%%%%%%%%%%%%%%%%%%%%%%%%%%%%%%%%%%%%
%%																			%%
%%																			%%
%%					LONG-TIME ESTIMATES 		PROOF OF THE PART (I) OF THEOREM 2  			%%
%%																			%%		
%%																			%%
%%%%%%%%%%%%%%%%%%%%%%%%%%%%%%%%%%%%%%%%%%%%%%%%%%%%%%%%%
%%%%%%%%%%%%%%%%%%%%%%%%%%%%%%%%%%%%%%%%%%%%%%%%%%%%%%%%%

\medskip
\textbf{Proof of the part (i) of Theorem 2.}\medskip\newline
Let us fix $n\in\mathbf{N}$. Due to the asymptotic behaviour of $p^{(\mu)}(t,x,y)$ for $xy/t\rightarrow\infty$  (see \eqref{pmu:infinity}) we can rewrite the result  obtained in Proposition 2 as follows
\formula[pmu:xyawayboundary]{
p_1^{(\mu)}(t,x,y)&=\wolny\left[1+\sum_{k=1}^{n-1} c_k^{(\mu)}\left(-\frac{t}{xy}\right)^{k}+O_{\mu}\left(\frac{t}{xy}\right)^n\right]\notag\\
&\times\left[1+O_{\mu}\left(\exp{\left(-\frac{xy}{4t}\right)}\right)\right],
}
where $x,y>2,\  t\geq 4$ and $xy/t\rightarrow\infty$.
Notice that for $x,y>2$ we have
\formula
{
\frac{\expa}{\expa-\expb}
&=1+O_{\mu}\left(\exp{\left(-\frac{xy}{2t}\right)}\right).
}
Thus we can replace the term $\expa$  in \eqref{pmu:xyawayboundary} by the expression 
$$\left(\expa-\expb\right)\left(1+O_{\mu}\left(\exp{\left(-\frac{xy}{2t}\right)}\right)\right)\/.$$
Moreover, since the exponential growth dominates the power  one, the product of the expressions in square brackets in \eqref{pmu:xyawayboundary} can be  simply replaced by the first of these. Therefore, in this way we have proved the part (i) of Theorem 2.
\begin{flushright}
$\square$
\end{flushright}

%%%%%%%%%%%%%%%%%%%%%%%%%%%%%%%%%%%%%%%%%%%%%%%%%%%%%%%%%
%%%%%%%%%%%%%%%%%%%%%%%%%%%%%%%%%%%%%%%%%%%%%%%%%%%%%%%%%
%%																			%%
%%																			%%
%%					LONG-TIME ESTIMATES 		DEFINITION 	\mathcal{I}_k^{\mu}  k=1,2			%%
%%																			%%
%%																			%%
%%%%%%%%%%%%%%%%%%%%%%%%%%%%%%%%%%%%%%%%%%%%%%%%%%%%%%%%%
%%%%%%%%%%%%%%%%%%%%%%%%%%%%%%%%%%%%%%%%%%%%%%%%%%%%%%%%%

The proof of Theorem 1 in the  case $1<x<2<y, \ t\geq t_0(\mu)$ will be provided separately in the situations when $\frac{(x-1)(y-1)}{t}$  is bounded away from zero or bounded away from infinity. To simplify the notation in the proof of the next proposition, we introduce the following functions
\formula{
\mathcal{I}_1^{(\mu)}(t,x,y)&=\int_0^t p^{(\mu)}(t-s,1,y)x^{1/2-\mu}q_x^{(1/2)}(s)ds,\\
\mathcal{I}_2^{(\mu)}(t,x,y)&=\int_0^t sp^{(\mu)}(t-s,1,y)x^{1/2-\mu}q_x^{(1/2)}(s)ds,
}
where $\mu\geq0, \ x,y>1, \ t>0$. Below given estimates  of $\mathcal{I}_1^{(\mu)}(t,x,y)$ and $\mathcal{I}_1^{(\mu)}(t,x,y)$ are crucial to prove the estimate \eqref{general:bound} under the additional  assumptions $\frac{(x-1)(y-1)}{t}\geq1, \ 1<x<2<y, \ t\geq t_0(\mu)$ (see also the part (ii) of Theorem 2).

%%%%%%%%%%%%%%%%%%%%%%%%%%%%%%%%%%%%%%%%%%%%%%%%%%%%%%%%%
%%%%%%%%%%%%%%%%%%%%%%%%%%%%%%%%%%%%%%%%%%%%%%%%%%%%%%%%%
%%																			%%
%%																			%%
%%						LONG-TIME ESTIMATES 		PROPOSITION 3 	Helpful integrals			%%
%%																			%%
%%																			%%
%%%%%%%%%%%%%%%%%%%%%%%%%%%%%%%%%%%%%%%%%%%%%%%%%%%%%%%%%
%%%%%%%%%%%%%%%%%%%%%%%%%%%%%%%%%%%%%%%%%%%%%%%%%%%%%%%%%

\begin{proposition}\label{I1I2}
Let $\mu\geq0$. We have 
\formula{
\left\vert\mathcal{I}_1^{(\mu)}(t,x,y)-\frac{(xy)^{-\mu-1/2}}{\sqrt{2\pi t}}\left(1-c_1^{(\mu)}\frac{t}{xy}\right)\expb \right\vert \stackrel{\mu}{\lesssim} \left(\frac{t}{xy}\right)^2 g^{(\mu)}(t,x,y),
}
for $xy/t\rightarrow\infty$, where $1<x<2<y, \ t\geq t_0(\mu)$ and $\frac{(x-1)(y-1)}{t}\geq 1$. It holds also
\formula{
\left\vert \mathcal{I}_2^{(\mu)}(t,x,y)\right\vert\stackrel{\mu}{\lesssim}\left(\frac{t}{xy}\right)^2 g^{(\mu)}(t,x,y),
}
whenever $y>x, \ y>2$ and $xy/t\rightarrow\infty$.
\end{proposition}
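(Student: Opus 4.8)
The plan is to substitute the explicit density $q_x^{(1/2)}$ and the asymptotic expansion of the free kernel, reduce both quantities to moments of $\deff$, and then absorb all secondary terms into $(t/xy)^2 g^{(\mu)}(t,x,y)$ using the standing assumptions. First insert \eqref{case:mu12:q} with $a=1$, which gives $x^{1/2-\mu}q_x^{(1/2)}(s)=(x-1)x^{-\mu-1/2}(2\pi s^3)^{-1/2}\exp(-A/s)$ with $A=(x-1)^2/2$, and expand $p^{(\mu)}(t-s,1,y)$ through \eqref{eq:pdf} and \eqref{I:infinity} (equivalently \eqref{pmu:infinity}). Since $y/(t-s)\ge y/t$ and $y/t\to\infty$ (because $xy/t\to\infty$ and $x<2$), this expansion is uniform in $s\in(0,t)$, and with $B=(y-1)^2/2$ it reads $p^{(\mu)}(t-s,1,y)=\frac{y^{-\mu-1/2}}{\sqrt{2\pi(t-s)}}\exp(-B/(t-s))\left[1-\const\tfrac{t-s}{y}+O_\mu((\tfrac{t-s}{y})^2)\right]$. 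Multiplying the two and integrating, for $j\in\{1,2\}$ one obtains
\[
\mathcal I_j^{(\mu)}(t,x,y)=\frac{(x-1)(xy)^{-\mu-1/2}}{2\pi}\left[\int_0^t s^{j-1}\deff\,ds-\frac{\const}{y}\int_0^t s^{j-1}(t-s)\deff\,ds+O_\mu\!\Big(\frac{1}{y^2}\int_0^t s^{j-1}(t-s)^2\deff\,ds\Big)\right];
\]
for $j=1$ the first contribution also equals $(xy)^{1/2-\mu}r_1^{(1/2)}(t,x,y)$ modulo terms of the displayed type (because $I_\mu(z)/I_{1/2}(z)=1-\const/z+O_\mu(1/z^2)$), which makes it explicit via \eqref{case:mu12:r1}.

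Next I would evaluate the moments. By Lemma~\ref{mu12}, $\int_0^t\deff\,ds=\sqrt{\pi/A}\,t^{-1/2}\exp(-(\sqrt A+\sqrt B)^2/t)$; since $(\sqrt A+\sqrt B)^2=(x+y-2)^2/2$ and $\sqrt{\pi/A}=\sqrt{2\pi}/(x-1)$, the leading term of $\mathcal I_1^{(\mu)}$ is exactly $\frac{(xy)^{-\mu-1/2}}{\sqrt{2\pi t}}\exp(-(x+y-2)^2/(2t))$. Writing $\int_0^t(t-s)\deff\,ds=t\int_0^t\deff\,ds-\int_0^t s\deff\,ds$ and inserting the estimate of $\int_0^t s\deff\,ds$ from Lemma~\ref{mu32} gives the precise leading behaviour of $\int_0^t(t-s)\deff\,ds$; carrying it through, the $k=1$ bracket term in $\mathcal I_1^{(\mu)}$ equals $-\const\frac{t}{xy}$ times the leading term plus an error controlled by $(x-1)\,\frac{y-2}{xy(x+y-2)}\,\frac{(xy)^{-\mu-1/2}}{\sqrt t}\exp(-(x+y-2)^2/(2t))$. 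The higher moments are bounded crudely by $\int_0^t s^{j-1}(t-s)^k\deff\,ds\le t^k\int_0^t s^{j-1}\deff\,ds$, which for $j=1$ costs only a factor $(t/y)^k\to0$, and for $j=2$ yields $\mathcal I_2^{(\mu)}(t,x,y)\lesssim (x-1)(xy)^{-\mu-1/2}\frac{\sqrt t}{x+y-2}\exp(-(x+y-2)^2/(2t))$ directly from the Lemma~\ref{mu32} bound on $\int_0^t s\deff\,ds$.

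The final step is the bookkeeping. Put $z=2(x-1)(y-1)/t$, so $g^{(\mu)}(t,x,y)=\frac{(xy)^{-\mu-1/2}}{\sqrt{2\pi t}}\exp(-(x-y)^2/(2t))(1-\expz)$ while $\exp(-(x+y-2)^2/(2t))=\exp(-(x-y)^2/(2t))\,\expz$, since $(x+y-2)^2-(x-y)^2=4(x-1)(y-1)$. Each leftover term of $\mathcal I_1^{(\mu)}$ — the $k=1$ error above, the $k=2$ remainder, and the image term $\exp(-(x+y)^2/(2t))$ produced by \eqref{case:mu12:r1} — has the shape $(x-1)\cdot h\cdot\exp(-(x+y-2)^2/(2t))$ with $h$ free of negative powers of $x-1$; substituting $x-1=\frac{tz}{2(y-1)}$ and using $x+y-2\asymp y-1\asymp y\asymp xy$ for $1<x<2<y$, the ratio of such a term to $(t/xy)^2 g^{(\mu)}(t,x,y)$ collapses to a bounded quantity times $z/(e^{z}-1)$, which is $\le 2/(e^2-1)<1$ under the hypothesis $(x-1)(y-1)/t\ge1$, i.e. $z\ge2$; the image term carries in addition a factor $\exp(-2(x+y-1)/t)$, and $\frac{x^2y^2}{t^2}\exp(-2(x+y-1)/t)\to0$ because $y/t\to\infty$. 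For $\mathcal I_2^{(\mu)}$ no lower bound on $z$ is needed: the same substitution rewrites $(x-1)(xy)^{-\mu-1/2}\frac{\sqrt t}{x+y-2}\exp(-(x+y-2)^2/(2t))$ as a bounded multiple of $z/(e^z-1)$ times $(t/xy)^2 g^{(\mu)}(t,x,y)$, and $z/(e^z-1)\le1$ for all $z>0$.

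The hard part is the $\mathcal I_1^{(\mu)}$-estimate: producing the \emph{exact} constant $\const$ in front of $t/xy$ forces one to pin down the leading asymptotics of $\int_0^t(t-s)\deff\,ds$ precisely (not merely up to a multiplicative constant), hence the sharp form of Lemma~\ref{mu32}; and the error bound must remain uniform all the way down to the boundary $(x-1)(y-1)/t=1$ of the admissible régime, which is exactly where the stationary–phase behaviour of the $\deff$-moments degenerates and where the gain $z/(e^z-1)\le 2/(e^2-1)$ is what saves the estimate.
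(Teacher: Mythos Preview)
Your approach is essentially the paper's: expand $I_\mu(y/(t-s))$ to second order, reduce both $\mathcal I_j^{(\mu)}$ to the moments $\int_0^t s^k f_{A,B,t}(s)\,ds$ for $k=0,1$, evaluate these via Lemmas~\ref{mu12} and~\ref{mu32}, and then pass from $\exp(-(x+y-2)^2/(2t))$ to $g^{(\mu)}$ via \eqref{boundb}. The paper's split $\mathcal I_{1,1}^{(\mu)}+\mathcal I_{1,2}^{(\mu)}$ is your decomposition $\int_0^t(t-s)f=t\int_0^t f-\int_0^t sf$, written with $t/(xy)$ in place of $t/y$ (harmless for $1<x<2$).

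One slip in your bookkeeping paragraph: the $k=2$ remainder does \emph{not} carry a factor $(x-1)$ after multiplication by the prefactor $(x-1)(xy)^{-\mu-1/2}/(2\pi)$, because that $(x-1)$ cancels the $1/(x-1)$ coming from $\int_0^t f_{A,B,t}(s)\,ds=\sqrt{2\pi}\,((x-1)\sqrt t)^{-1}\exp(-(x+y-2)^2/(2t))$. What remains is $\lesssim(t/xy)^2\,(xy)^{-\mu-1/2}(2\pi t)^{-1/2}\exp(-(x+y-2)^2/(2t))$, and its ratio to $(t/xy)^2 g^{(\mu)}(t,x,y)$ is a constant times $1/(e^z-1)$, not $z/(e^z-1)$. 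It is precisely \emph{this} term that forces the hypothesis $z\ge 2$; this is exactly the paper's step \eqref{I11e}, where \eqref{boundb} is applied and then $(x-1)(y-1)/t\ge 1$ is invoked. The terms that genuinely carry an $(x-1)$---your $k=1$ error and the $\mathcal I_2^{(\mu)}$ bound---need no lower bound on $z$, since $z/(e^z-1)\le 1$ for all $z>0$, as you correctly observe for $\mathcal I_2^{(\mu)}$. So the hypothesis enters for a slightly different reason than you state, but the estimate goes through unchanged. The detour through $r_1^{(1/2)}$ and its ``image term'' $\exp(-(x+y)^2/(2t))$ is unnecessary: the direct Lemma~\ref{mu12} evaluation (which the paper follows) produces only $\exp(-(x+y-2)^2/(2t))$, and no extra term has to be absorbed.
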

\begin{proof}

At first, we deal with $\mathcal{I}_1^{(\mu)}(t,x,y)$. Since $y/(t-s)\geq y/t\rightarrow\infty$ for $1<x<2$ and $xy/t\rightarrow\infty$, asymptotic behaviour of $I_{\mu}(z)$ at infinity \eqref{I:infinity} (for $n=2$) enable us to write 
\formula{
\left\vert
\sqrt{\frac{2\pi y}{t-s}}\exp{\left(-\frac{y}{t-s}\right)}I_{\mu}\left(\frac{y}{t-s}\right)
-\left(1-c_1^{(\mu)}\frac{t-s}{y}\right)
\right\vert
\lesssim \left(\frac{t}{xy}\right)^2
}
or equivalently
\formula{
I_{\mu}\left(\frac{y}{t-s}\right)=\sqrt{\frac{t-s}{2\pi y}}\exp{\left(\frac{y}{t-s}\right)}\left(1-c_1^{(\mu)}\frac{t-s}{y}+O_{\mu}\left(\frac{t}{xy}\right)^2\right),
}
where we have also used $((t-s)/y)^2\lesssim (t/(xy))^2$  for $1<x<2$.
Hence, using exact formula for $q_{x}^{(1/2)}(s)$ (see \eqref{case:mu12:q}) and $f_{a,b,t}(s)$ (see \eqref{def:f}), we obtain
\formula{
\mathcal{I}_{1}^{(\mu)}(t,x,y)&= \mathcal{I}_{1,1}^{(\mu)}+ \mathcal{I}_{1,2}^{(\mu)}\/,
}
where
\formula{
\mathcal{I}_{1,1}^{(\mu)}&=(x-1)\frac{(xy)^{-\mu-1/2}}{2\pi}\left(1-c_1^{(\mu)}\frac{t}{xy}+O_{\mu}\left(\frac{t}{xy}\right)^2\right)\int_0^t  \deff ds,\\
\mathcal{I}_{1,2}^{(\mu)}&=  (x-1)\frac{(xy)^{-\mu-1/2}}{2\pi }\frac{c_1^{(\mu)}}{xy}\int_0^t s \deff ds.
}
Applying Lemma \ref{mu12} from Appendix we get
\formula{
\mathcal{I}_{1,1}^{(\mu)}= \left(1-c_1^{(\mu)}\frac{t}{xy}+O_{\mu}\left(\frac{t}{xy}\right)^2\right)\frac{(xy)^{-\mu-1/2}}{\sqrt{2\pi t}}\expb\/.
}
Moreover, due to \eqref{boundb} one can estimate
\formula[I11e]{
&\left\vert\mathcal{I}_{1,1}^{(\mu)}-\left(1-c_1^{(\mu)}\frac{t}{xy}\right)\frac{(xy)^{-\mu-1/2}}{\sqrt{2\pi t}}\expb\right\vert\notag\\
&\lesssim\left(\frac{t}{xy}\right)^2 \frac{(xy)^{-\mu-1/2}}{\sqrt{t}}\frac{t}{(x-1)(y-1)}\left[\expa-\expb\right]\notag\\
&\lesssim\left(\frac{t}{xy}\right)^2 \frac{(xy)^{-\mu-1/2}}{\sqrt{t}}\left[\expa-\expb\right],
}
where the last inequality is a consequence of $\frac{(x-1)(y-1)}{t}\geq 1$ . On the other hand, we apply Lemma \ref{mu32} from Appendix to calculate $\mathcal{I}_{1,2}^{(\mu)}$ as follows
\formula{
\mathcal{I}_{1,2}^{(\mu)}&=(x-1)\frac{(xy)^{-\mu-1/2}}{2}\frac{c_1^{(\mu)}}{xy} \left[1-\erf\left(\frac{x+y-2}{\sqrt{2t}}\right)\right].
}
Since $1<x<2<y$ and $xy/t\rightarrow\infty$ we get $\frac{x+y-2}{\sqrt{t}}\geq \frac{y-1}{\sqrt{t}}\rightarrow\infty$, which enable us to use \eqref{erf:estimate} and write simply
\formula[I12e]{
|\mathcal{I}_{1,2}^{(\mu)}|
&\approx \frac{x-1}{xy}\frac{(xy)^{-\mu-1/2}}{\sqrt{t}}\frac{t}{x+y-2}\expb\notag\\
&\lesssim \frac{(xy)^{-\mu-1/2}}{\sqrt{t}} \frac{t^2}{y^3} \left[\expa-\expb\right]\notag\\
&\lesssim \frac{(xy)^{-\mu-1/2}}{\sqrt{t}} \left(\frac{t}{xy}\right)^3 \left[\expa-\expb\right].
}
Here the second line follows from $y>2>x$ and the estimate \eqref{boundb}. The last one is valid, because of the assumptions $1<x<2$ and $t\geq t_0(\mu)$. Collecting together \eqref{I11e} and \eqref{I12e} we have
\formula{
\mathcal{I}_{1}^{(\mu)}(t,x,y)
=&\left(1-c_1^{(\mu)}\frac{t}{xy}\right)\frac{(xy)^{-\mu-1/2}}{\sqrt{2\pi t}}\expb\\
+& \frac{(xy)^{-\mu-1/2}}{\sqrt{t}} O_{\mu}\left(\frac{t}{xy}\right)^2 \left[\expa-\expb\right]
}
and it is exactly  what we needed.  To deal with $\mathcal{I}_2^{(\mu)}(t,x,y)$ we apply \eqref{I:infinity} (for $n=1$) together with exact formula for $q_{x}^{(1/2)}(s)$ (see \eqref{case:mu12:q}) to arrive at
\formula{
\mathcal{I}_2^{(\mu)}(t,x,y)\approx (x-1)(xy)^{-\mu-1/2} \int_0^t sf_{a,b}(s)ds.
}
In view of Lemma \ref{mu32} from Appendix (and the estimate given therein)  we can write
\formula{
\mathcal{I}_2^{(\mu)}(t,x,y)
&\approx (x-1)\frac{(xy)^{-\mu-1/2}}{\sqrt{ t}}\frac{t}{x+y-2}\expb\\
&\lesssim \frac{(xy)^{-\mu-1/2}}{\sqrt{ t}} \left(\frac{t}{xy}\right)^2\left[\expa-\expb\right],
}
where the last estimate follows from assumption $1<x<2<y$ and inequality \eqref{boundb} given in Appendix. This ends the proof.
\end{proof}

%%%%%%%%%%%%%%%%%%%%%%%%%%%%%%%%%%%%%%%%%%%%%%%%%%%%%%%%%
%%%%%%%%%%%%%%%%%%%%%%%%%%%%%%%%%%%%%%%%%%%%%%%%%%%%%%%%%
%%																			%%
%%																			%%
%%							PROOF OF THE PART (II) OF THEOREM 2						%%
%%																			%%
%%																			%%
%%%%%%%%%%%%%%%%%%%%%%%%%%%%%%%%%%%%%%%%%%%%%%%%%%%%%%%%%
%%%%%%%%%%%%%%%%%%%%%%%%%%%%%%%%%%%%%%%%%%%%%%%%%%%%%%%%%

Now we are in position to deal with the part (ii) of Theorem 2, i.e. when $1<x<2<y, \ t\geq t_0(\mu)$ and $\frac{(x-1)(y-1)}{t}\geq 1$. We follow the notation given in the previous proposition.

\medskip
\textbf{Proof of the part (ii) of Theorem 2.}\medskip\newline
It is enough to use the Hunt formula \eqref{eq:hunt:formula} and estimate each component separately. We start with the subtrahend $r_1^{(\mu)}(t,x,y)$. Namely, using the asymptotic expansion \eqref{eq:density:T1:3} of $q_x^{(\mu)}(s)$, one can write
\formula{
r_1^{(\mu)}(t,x,y)=\mathcal{I}_1^{(\mu)}(t,x,y)+O_{\mu}\left(\frac{\mathcal{I}_2^{(\mu)}(t,x,y)}{x}\right).
}
Hence, Proposition \ref{I1I2} implies that
\formula[r1:est]{
r_1^{(\mu)}(t,x,y)
&=\left(1-\frac{4\mu^2-1}{8}\frac{t}{xy}\right)\frac{(xy)^{-\mu-1/2}}{\sqrt{2\pi t}}\expb\notag\\
&+O_{\mu}\left(\frac{t}{xy}\right)^2\frac{(xy)^{-\mu-1/2}}{\sqrt{t}}g^{(\mu)}(t,x,y).
}
The next step is to provide an appropriate expansion for the expression $p^{(\mu)}(t,x,y)$. Since $xy/t\rightarrow\infty$, applying the formula \eqref{pmu:infinity} for $n=2$ gives us
\formula[pmu:est]{
p^{(\mu)}(t,x,y)
&=\frac{(xy)^{-\mu-1/2}}{\sqrt{2\pi t}}\expa\left[1-\frac{4\mu^2-1}{8}\frac{t}{xy}+O_{\mu}\left(\frac{t}{xy}\right)^2\right]\notag\\
&=\frac{(xy)^{-\mu-1/2}}{\sqrt{2\pi t}}\expa\left[1-\frac{4\mu^2-1}{8}\frac{t}{xy}\right]\notag\\
&+O_{\mu}\left(\frac{t}{xy}\right)^2\frac{(xy)^{-\mu-1/2}}{\sqrt{2\pi t}}g^{(\mu)}(t,x,y),
}
where the last equality follows from \eqref{bounda}. Then, applying \eqref{pmu:est} and \eqref{r1:est} to the Hunt formula (see \eqref{eq:hunt:formula}), we arrive at
\formula{
p_1^{(\mu)}(t,x,y)
&=p^{(\mu)}(t,x,y)-r_1^{(\mu)}(t,x,y)\\
&=\frac{(xy)^{-\mu-1/2}}{\sqrt{2\pi t}}g^{(\mu)}(t,x,y)\left[1-\frac{4\mu^2-1}{8}\frac{t}{xy}+O_{\mu}\left(\frac{t}{xy}\right)^2\right].
}
The proof of the part (ii) of Theorem 2 is complete.
\begin{flushright}
$\square$
\end{flushright}

Finally, we deal with the last case,  namely $1<x<2<y, \ t\geq t_0(\mu)$ and $\frac{(x-1)(y-1)}{t}<1$. In this r\'egime of the variables we provide an upper bound for $p_a^{(\mu)}(t,x,y)$, where $0\leq\mu<1/2$ (see Proposition 4), as well as a lower bound for $\mu\geq 1/2$ (see Proposition 5). Moreover, the proof of Theorem 1 is complete in this case, since  we have for $0\leq\mu<1/2$ (see \eqref{greater12})
\formula{
p_1^{(\mu)}(t,x,y)\geq g^{(\mu)}(t,x,y), \quad x,y>1, \quad t>0,
}
while for $\mu\geq 1/2$ it holds  (see \eqref{smaller12})
\formula{
p_1^{(\mu)}(t,x,y)\leq g^{(\mu)}(t,x,y), \quad x,y>1, \quad t>0.
}

%%%%%%%%%%%%%%%%%%%%%%%%%%%%%%%%%%%%%%%%%%%%%%%%%%%%%%%%%
%%%%%%%%%%%%%%%%%%%%%%%%%%%%%%%%%%%%%%%%%%%%%%%%%%%%%%%%%
%%																			%%
%%																			%%
%%		LONG-TIME ESTIMATES 		PROPOSITION 4	0<(x-1)(y-1)/t<1		small indices 0<\mu<1/2		%%
%%																			%%
%%																			%%
%%%%%%%%%%%%%%%%%%%%%%%%%%%%%%%%%%%%%%%%%%%%%%%%%%%%%%%%%
%%%%%%%%%%%%%%%%%%%%%%%%%%%%%%%%%%%%%%%%%%%%%%%%%%%%%%%%%

\begin{proposition}
Let $0\leq\mu\leq1/2$. There exists a constant $C_1^{(\mu)}>0$ such that
\formula{
p_1^{(\mu)}(t,x,y)\leq g^{(\mu)}(t,x,y)\left(1+C_1^{(\mu)}\frac{t}{xy}\right),
}
whenever $xy/t\rightarrow\infty$ and $1<x<2, \ y>2, t>t_0(\mu)=1,\ \frac{(x-1)(y-1)}{t}\leq 1$.
\end{proposition}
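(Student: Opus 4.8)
The plan is to compare $p_1^{(\mu)}$ with the explicitly known kernel $p_1^{(1/2)}$ via Hunt's formula. As in the rest of this section I would take $a=1$, restrict to $1<x<2<y$, $t\ge t_0(\mu)=1$ and $\tfrac{(x-1)(y-1)}{t}\le 1$, and assume $0\le\mu<1/2$; the case $\mu=1/2$ is immediate because $p_1^{(1/2)}(t,x,y)=g^{(1/2)}(t,x,y)$ by \eqref{case:mu12:p1}. Since $g^{(\mu)}=(xy)^{1/2-\mu}p_1^{(1/2)}=(xy)^{1/2-\mu}\big(p^{(1/2)}-r_1^{(1/2)}\big)$, while \eqref{eq:pdf} gives $p^{(\mu)}(t,x,y)=(xy)^{1/2-\mu}\tfrac{I_\mu(xy/t)}{I_{1/2}(xy/t)}p^{(1/2)}(t,x,y)$, applying \eqref{eq:hunt:formula} to both indices shows that $p_1^{(\mu)}\le\tfrac{I_\mu(xy/t)}{I_{1/2}(xy/t)}g^{(\mu)}$ is equivalent to
\[
 r_1^{(\mu)}(t,x,y)\ \ge\ \frac{I_\mu(xy/t)}{I_{1/2}(xy/t)}\,(xy)^{1/2-\mu}\,r_1^{(1/2)}(t,x,y).
\]
Because \eqref{I:infinity} yields $\tfrac{I_\mu(xy/t)}{I_{1/2}(xy/t)}=1-c_1^{(\mu)}\tfrac{t}{xy}+O_\mu\big((t/xy)^2\big)=1+\tfrac{1-4\mu^2}{8}\tfrac{t}{xy}+O_\mu\big((t/xy)^2\big)\le 1+C_1^{(\mu)}\tfrac{t}{xy}$ for $xy/t$ large, the displayed inequality implies the proposition. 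So the whole problem reduces to this single lower bound for $r_1^{(\mu)}$.

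To attack it I would insert $p^{(\mu)}(t-s,1,y)=y^{1/2-\mu}\tfrac{I_\mu(y/(t-s))}{I_{1/2}(y/(t-s))}\,p^{(1/2)}(t-s,1,y)$ (again from \eqref{eq:pdf}) and the bound $q_x^{(\mu)}(s)\ge x^{1/2-\mu}q_x^{(1/2)}(s)$, which is exactly where $0\le\mu\le1/2$ is used: by \eqref{eq:density:T1:2} the error term there is nonnegative, and $q_x^{(1/2)}(s)=x^{-1}\tfrac{x-1}{\sqrt{2\pi s^3}}e^{-(x-1)^2/2s}$ by \eqref{case:mu12:q}. This gives $r_1^{(\mu)}(t,x,y)\ge(xy)^{1/2-\mu}\int_0^t \tfrac{I_\mu(y/(t-s))}{I_{1/2}(y/(t-s))}\,p^{(1/2)}(t-s,1,y)q_x^{(1/2)}(s)\,ds$, so it suffices that the $\tfrac{I_\mu(y/(t-s))}{I_{1/2}(y/(t-s))}$‑weighted average, with respect to the (sub‑probability) measure $d\nu(s)=p^{(1/2)}(t-s,1,y)q_x^{(1/2)}(s)\,ds$ whose total mass is $r_1^{(1/2)}(t,x,y)$, is at least $\tfrac{I_\mu(xy/t)}{I_{1/2}(xy/t)}$. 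Since $z\mapsto I_\mu(z)/I_{1/2}(z)$ is decreasing (its limits are $+\infty$ at $0$ by \eqref{I:zero} and $1$ at $\infty$ by \eqref{I:infinity}, and Bessel‑ratio monotonicity is classical), the integrand exceeds $\tfrac{I_\mu(xy/t)}{I_{1/2}(xy/t)}$ precisely for $s\le t\tfrac{x-1}{x}$. I would then split $I_\mu$ by \eqref{I:infinity} with $n=2$, writing $\tfrac{I_\mu(y/(t-s))}{I_{1/2}(y/(t-s))}=1-c_1^{(\mu)}\tfrac{t-s}{y}+O_\mu\big((\tfrac{t-s}{y})^2\big)$ and likewise at $xy/t$, reducing everything to the quantities $\int_0^t d\nu(s)=r_1^{(1/2)}$, $\tfrac1y\int_0^t(t-s)\,d\nu(s)$ and the remainder $\tfrac1{y^2}\int_0^t(t-s)^2\,d\nu(s)$, all of which are computable in closed form through \eqref{case:mu12:q}, \eqref{case:mu12:r1} and Lemmas~\ref{mu12}--\ref{mu32}, which give $\int_0^t f_{A,B,t}(s)\,ds$ and $\int_0^t s\,f_{A,B,t}(s)\,ds$. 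The first‑order terms match $p^{(\mu)}$'s first‑order correction with the right sign because $c_1^{(\mu)}\le 0$ and $x>1$ (so the "surplus'' on $(0,t\tfrac{x-1}{x})$ beats the "deficit'' on $(t\tfrac{x-1}{x},t)$), while the leftover errors reduce to terms of size $\big(\tfrac{t}{xy}\big)^2 r_1^{(1/2)}$ and $\tfrac{x-1}{y(x+y-2)}\,r_1^{(1/2)}$, which the conditions $1<x<2$, $t\ge 1$, $\tfrac{(x-1)(y-1)}{t}\le 1$ and $xy/t\to\infty$ are tailored to absorb into $C_1^{(\mu)}\tfrac{t}{xy}g^{(\mu)}$.

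The hard part will be precisely the control of the sub‑integral over $s$ close to $t$: there $y/(t-s)$ is large and $I_\mu(y/(t-s))/I_{1/2}(y/(t-s))$ is at its smallest, so one must show that this range carries only a small fraction of the $\nu$‑mass — an extra factor of order $\tfrac{x-1}{\sqrt t}$ (equivalently $\tfrac{t}{(x+y-2)\,xy}$ relative to $r_1^{(1/2)}$) — which requires evaluating incomplete versions of the integrals in Lemmas~\ref{mu12}--\ref{mu32} and again uses $\tfrac{(x-1)(y-1)}{t}\le 1$ in an essential way. Once this estimate is secured, one obtains $p_1^{(\mu)}(t,x,y)\le \tfrac{I_\mu(xy/t)}{I_{1/2}(xy/t)}\,g^{(\mu)}(t,x,y)\le\big(1+C_1^{(\mu)}\tfrac{t}{xy}\big)g^{(\mu)}(t,x,y)$, completing the proof.

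An alternative, more probabilistic route (closer in spirit to the proof of Proposition~1) is to apply the absolute continuity \eqref{ac:formula} with indices $\mu$ and $1/2$ to get $g^{(\mu)}(t,x,y)=\tilde G\,p_1^{(\mu)}(t,x,y)$, where $\tilde G=\mathbf E^{(\mu)}_x\!\big[\exp\!\big(\tfrac{\mu^2-1/4}{2}\int_0^t R_r^{-2}dr\big)\,\big|\,t<T_1,\ R_t=y\big]\le 1$; then $p_1^{(\mu)}=g^{(\mu)}/\tilde G$ and, since $1-e^{-u}\le u$, the claim reduces to $1-\tilde G\le \tfrac{1/4-\mu^2}{2}\int_0^t\mathbf E^{(\mu)}_{x\to y,\,|1}[R_r^{-2}]\,dr\le C_\mu\tfrac{t}{xy}$, which in turn can be estimated by expanding the bridge moment through Chapman–Kolmogorov and the sharp two‑sided bounds on $p_1^{(\mu)}$ from \eqref{POTA:MN} and \cite{BogusMalecki:POTA,BogusMalecki:MN}. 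Either way the crux is a quantitative bound showing that the relevant conditional expectation is within $O_\mu(t/xy)$ of $1$.
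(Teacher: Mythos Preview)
Your plan is different from the paper's and, as you yourself say, the ``hard part'' is left undone. The reduction to
\[
\int_0^t\frac{I_\mu(y/(t-s))}{I_{1/2}(y/(t-s))}\,d\nu(s)\ \ge\ \frac{I_\mu(xy/t)}{I_{1/2}(xy/t)}\,r_1^{(1/2)}(t,x,y)
\]
is correct, but you never prove it. After expanding the ratio to first order the needed inequality becomes (up to a positive factor $-c_1^{(\mu)}$) essentially $\int_0^t s\,d\nu(s)\le \tfrac{t(x-1)}{x}r_1^{(1/2)}$, and the second--order remainder is of size $y^{-2}\int_0^t(t-s)^2\,d\nu(s)$. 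When $x-1$ is very small the ``surplus/deficit'' margin in the first--order term is of order $(x-1)\sqrt{t}/(x^2y^2)$ times the common exponential, while the remainder can be as large as $t^{3/2}/(xy^3)$ times the same exponential; their ratio is $\approx xt/((x-1)y)$, which is \emph{not} uniformly bounded in the stated regime. So the exact inequality you aim for can fail, and you would have to carry the error through and show it is absorbed by $\tfrac{t}{xy}g^{(\mu)}$---this is doable but it is genuinely the crux, not a routine check, and ``incomplete versions of Lemmas~\ref{mu12}--\ref{mu32}'' are not provided. The alternative probabilistic route has the same status: the bridge moment $\int_0^t\mathbf E^{(\mu)}_{x\to y,|1}[R_r^{-2}]\,dr$ really needs an $O(t/(xy))$ bound, which you only motivate heuristically.

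The paper bypasses this weighted--average problem entirely. Instead of comparing $I_\mu$ with $I_{1/2}$ inside the integral, it uses the monotonicity $I_\mu\!\left(\tfrac{y}{t-s}\right)\ge I_\mu\!\left(\tfrac{y}{t}\right)$ and the crude $\tfrac{1}{t-s}\ge\tfrac{1}{\sqrt{t}\sqrt{t-s}}$ to pull the Bessel function out of the $s$--integral altogether; together with $q_x^{(\mu)}\ge x^{1/2-\mu}q_x^{(1/2)}$ this leaves exactly an integral of the form $\int_0^t f_{A,(y^2+1)/2,t}(s)\,ds$, which Lemma~\ref{mu12} evaluates in closed form. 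One then compares $I_\mu(y/t)$ with $I_\mu(xy/t)$ via Laforgia's inequality \eqref{MBF:ineq:upper}, obtaining
\[
r_1^{(\mu)}(t,x,y)\ \ge\ p^{(\mu)}(t,x,y)\,x^{-\mu-1/2}\exp\!\Big(-\tfrac{(x-1)(\sqrt{y^2+1}+y-1)}{t}\Big),
\]
and the rest is an elementary comparison of $1-x^{-\mu-1/2}\exp(-\ldots)$ with $1-\exp\!\big(-\tfrac{2(x-1)(y-1)}{t}\big)$ via the mean value theorem. The key idea you are missing is that freezing $I_\mu$ at its minimum $y/t$ (rather than tracking the ratio $I_\mu/I_{1/2}$ at the running argument $y/(t-s)$) removes the need to control any $\nu$--mass near $s=t$.
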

\begin{proof}
Notice that for $0<s<t$ we have
\formula{
I_{\mu}\left(\frac{y}{t-s}\right)\geq I_{\mu}\left(\frac{y}{t}\right) \quad \text{and} \quad\frac{1}{t-s}\geq\frac{1}{\sqrt{t}\sqrt{t-s}}.
}
Combining these with 
$$
q_x^{(\mu)}(s)\geq \frac{x-1}{\sqrt{2\pi s^3}}x^{-\mu-1/2}\exp{\left(-\frac{(x-1)^2}{2s}\right)}, \quad x>1, \quad s>0, \quad 0<\mu<1/2,
$$
(see \eqref{eq:density:T1:2}) we get
\formula{
r_1^{(\mu)}(t,x,y)&\geq\frac{x-1}{\sqrt{2\pi x t}}(xy)^{-\mu}I_{\mu}\left(\frac{y}{t}\right)\int_0^t f_{a,\frac{y^2+1}{2},t}(s)ds\\
&=\frac{(xy)^{-\mu}}{t}\frac{1}{\sqrt{x}}I_{\mu}\left(\frac{y}{t}\right)\exp\left(-\frac{(x-1+\sqrt{y^2+1})^2}{2t}\right),
}
where the last equality follows from Lemma \ref{mu12} in Appendix. In view of the bound given in \eqref{MBF:ineq:upper} we have
\formula{
I_{\mu}\left(\frac{y}{t}\right)\geq I_{\mu}\left(\frac{xy}{t}\right)\exp{\left(-\frac{(x-1)y}{t}\right)}x^{-\mu},
}
which immediately yields (see also definition of $p^{(\mu)}(t,x,y)$ given in \eqref{eq:pdf})
\formula{
r_1^{(\mu)}(t,x,y)
&\geq\frac{(xy)^{-\mu}}{t}I_{\mu}\left(\frac{xy}{t}\right)x^{-\mu-1/2}\exp\left(-\frac{(x-1)y}{t}\right)\exp\left(-\frac{(x-1+\sqrt{y^2+1})^2}{2t}\right)\\
&=p^{(\mu)}(t,x,y)x^{-\mu-1/2}\exp\left(-\frac{(x-1)(\sqrt{y^2+1}+y-1)}{t}\right).
}
The Hunt formula \eqref{eq:hunt:formula} gives us therefore
\formula{
p_1^{(\mu)}(t,x,y)\leq p^{(\mu)}(t,x,y)\left[1-x^{-\mu-1/2}\exp\left(-\frac{(x-1)(\sqrt{y^2+1}+y-1)}{t}\right)\right].
}
Notice that the expression in square bracket can be decomposed as the sum of 
$$\mathcal{A}_1:=1-\expz$$
 and
 $$\mathcal{A}_2:=\expz-x^{-\mu-1/2}\exp{\left(-\frac{(x-1)(\sqrt{y^2+1}+y-1)}{t}\right)}.$$
 We show that the second one,  divided by $\mathcal{A}_1$, can be bounded from above by $t/(xy)$ (up to some multiplicative positive constant). To achieve this let us denote
\formula{
g(x):=x^{-\mu-1/2}\exp{\left(-\frac{(x-1)(\sqrt{y^2+1}+y-1)}{t}\right)}, \quad x>1,
}
and observe that for $\frac{(x-1)(y-1)}{t}\leq1$ we have (for some $1<x_0<x<2$)
\formula{
\frac{\mathcal{A}_2}{\mathcal{A}_1}&\approx\frac{t}{(x-1)(y-1)}\left(g(1)-g(x)\right)=-\frac{tg'(x_0)}{y-1}.
}
Since $g'(x)=-g(x)\frac{\sqrt{y^2+1}-(y-1)}{t}\left[1+\frac{t}{x}\frac{\mu+1/2}{\sqrt{y^2+1}-(y-1)}\right]$ we have that $g'(x)\approx-g(x)$ for $1<x<2<y, \ t>1$, which implies
\formula{
\frac{\mathcal{A}_2}{\mathcal{A}_1}\approx \frac{tg(x_0)}{y-1}\lesssim \frac{t}{y}.
}
Hence, for some $C_1>0$, we have
\formula{
p_1^{(\mu)}(t,x,y)\leq p^{(\mu)}(t,x,y)\left[1-\expz\right]\left[1+C_1\frac{t}{xy}\right].
}
Finally, notice that due to \eqref{pmu:infinity} we can estimate $p^{(\mu)}(t,x,y)$ for $xy/t\rightarrow\infty$ as follows
$$ p^{(\mu)}(t,x,y)\leq \frac{(xy)^{-\mu-1/2}}{\sqrt{2\pi t}}\expa \left(1+C_2\frac{t}{xy}\right),$$
where $C_2>0$ is a constant. Thus, the proof is complete with the constant $C_1^{(\mu)}:=C_1+C_2>0$.
\end{proof}

%%%%%%%%%%%%%%%%%%%%%%%%%%%%%%%%%%%%%%%%%%%%%%%%%%%%%%%%%
%%%%%%%%%%%%%%%%%%%%%%%%%%%%%%%%%%%%%%%%%%%%%%%%%%%%%%%%%
%%																			%%
%%																			%%
%%																			%%
%%		LONG-TIME ESTIMATES 		PROPOSITION 5 	0<(x-1)(y-1)/t<1		large indices \mu>1/2		%%
%%																			%%
%%																			%%
%%																			%%
%%%%%%%%%%%%%%%%%%%%%%%%%%%%%%%%%%%%%%%%%%%%%%%%%%%%%%%%%
%%%%%%%%%%%%%%%%%%%%%%%%%%%%%%%%%%%%%%%%%%%%%%%%%%%%%%%%%

\begin{proposition}
Let $\mu>1/2$. There exists a constant $C_2^{(\mu)}>0$ such that
\formula{
p_1^{(\mu)}(t,x,y)\geq g^{(\mu)}(t,x,y)\left(1-C_2^{(\mu)}\frac{t}{xy}\right),
}
whenever $xy/t\rightarrow\infty$ and $1<x<2, \ y>2, t>t_0(\mu)=8/(4\mu^2-1),\ \frac{(x-1)(y-1)}{t}\leq 1$.
\end{proposition}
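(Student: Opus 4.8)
The plan is to derive the proposition from the absolute continuity relation \eqref{ac:formula} with $\nu=1/2$, combined with the explicit formula \eqref{case:mu12:p1} for $p_1^{(1/2)}$. Applying \eqref{ac:formula} on $\{T_1>t\}$ with $\nu=1/2$ to a Borel set $A\subset(1,\infty)$ gives
\formula{
\mathbf{E}^{(\mu)}_x\!\left[t<T_1;R_t\in A\right]=\mathbf{E}^{(1/2)}_x\!\left[t<T_1;R_t\in A;\left(\frac{R_t}{x}\right)^{\mu-1/2}\exp\!\left(-c_1^{(\mu)}\int_0^t\frac{ds}{R_s^{2}}\right)\right],
}
where $c_1^{(\mu)}=\frac{\mu^{2}-1/4}{2}=\frac{4\mu^{2}-1}{8}>0$ for $\mu>1/2$ is precisely the coefficient of \eqref{ckmu}. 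Disintegrating the right hand side on the value of $R_t$, dividing through by $m^{(\mu)}(dy)=y^{2\mu+1}dy$, using $m^{(1/2)}(y)=y^{2}$, and invoking the identity $(xy)^{1/2-\mu}p_1^{(1/2)}(t,x,y)=g^{(\mu)}(t,x,y)$ (immediate from \eqref{case:mu12:p1}), one obtains
\formula{
p_1^{(\mu)}(t,x,y)=g^{(\mu)}(t,x,y)\,\mathbf{E}^{(1/2)}_{x\to y}\!\left[\exp\!\left(-c_1^{(\mu)}\int_0^t\frac{ds}{R_s^{2}}\right)\right],
}
where $\mathbf{E}^{(1/2)}_{x\to y}$ is the expectation for the $p_1^{(1/2)}$--bridge from $x$ to $y$ of length $t$. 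As $c_1^{(\mu)}>0$ this re-proves the known bound $p_1^{(\mu)}\le g^{(\mu)}$, so only the lower estimate is at stake.

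By the elementary inequality $e^{-u}\ge 1-u$ applied inside the last expectation, the proposition follows once we establish the single functional estimate
\formula{
\mathbf{E}^{(1/2)}_{x\to y}\!\left[\int_0^t\frac{ds}{R_s^{2}}\right]\stackrel{\mu}{\lesssim}\frac{t}{xy},
}
since then $p_1^{(\mu)}(t,x,y)\ge g^{(\mu)}(t,x,y)\bigl(1-c_1^{(\mu)}C\,t/(xy)\bigr)$ and $C_2^{(\mu)}:=c_1^{(\mu)}C$ works. Because the weight $R_s^{-2}$ exactly cancels the factor $z^{2}$ in $m^{(1/2)}(dz)$, Fubini's theorem rewrites the left hand side as
\formula{
\mathbf{E}^{(1/2)}_{x\to y}\!\left[\int_0^t\frac{ds}{R_s^{2}}\right]=\frac{1}{p_1^{(1/2)}(t,x,y)}\int_0^t\!\int_1^\infty p_1^{(1/2)}(s,x,z)\,p_1^{(1/2)}(t-s,z,y)\,dz\,ds,
}
so it suffices to prove $\int_0^t\!\int_1^\infty p_1^{(1/2)}(s,x,z)p_1^{(1/2)}(t-s,z,y)\,dz\,ds\stackrel{\mu}{\lesssim}\frac{t}{xy}\,p_1^{(1/2)}(t,x,y)$.

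For this I would substitute \eqref{case:mu12:p1} in the form $p_1^{(1/2)}(u,v,w)=\frac{1}{\sqrt{2\pi u}\,vw}e^{-(v-w)^{2}/2u}\bigl(1-e^{-2(v-1)(w-1)/u}\bigr)$, bounding in the denominator $1-e^{-2(x-1)(y-1)/t}\ge\frac{1-e^{-2}}{2}\cdot\frac{2(x-1)(y-1)}{t}$, which is legitimate exactly because $(x-1)(y-1)/t\le1$. Completing the square in $z$ pulls out $e^{-(x-y)^{2}/2t}$ and leaves, for each $s$, a Gaussian integral in $z$ centred at $z_0=\frac{(t-s)x+sy}{t}>1$ with variance $\sigma^{2}=\frac{s(t-s)}{t}$, of the function $z^{-2}\bigl(1-e^{-2(x-1)(z-1)/s}\bigr)\bigl(1-e^{-2(z-1)(y-1)/(t-s)}\bigr)$. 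The double integral is symmetric under $(x,y)\mapsto(y,x)$ (use \eqref{p1:symmetric} and $s\mapsto t-s$), so it is enough to control the part $s\in(0,t/2)$ for the bridges $x\to y$ and $y\to x$. Heuristically, the bridge started at the \emph{larger} endpoint stays for $s<t/2$ above one half of that endpoint with overwhelming probability (a Gaussian tail bound, using $y>2$ and $xy/t\to\infty$, which forces $y\to\infty$ and $y^{2}/t\to\infty$), contributing $\lesssim t/y^{2}\lesssim t/(xy)$; the bridge started at $x$ spends time of order $t/y$ at height $O(1)$, where $R_s^{-2}\le1$, contributing $\lesssim t/y\lesssim t/(xy)$, and for the remaining $s\le t/2$ it lies at height comparable to $sy/t$, contributing $\lesssim\int_{t/y}^{t/2}\frac{t^{2}}{s^{2}y^{2}}\,ds\lesssim t/y$. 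Analytically the same split is performed via $1-e^{-\xi}\le\min(1,\xi)$: where the relevant argument is $\ge1$ use the bound by $1$ and the crude $R_s^{-2}\le1$; where it is $<1$ use the bound by $\xi$, which supplies the factor $x-1$, while the weight $z^{-2}$ together with the location $z_0\gtrsim sy/t$ of the Gaussian supplies $y^{-1}$, and the resulting $s$--integral converges with the right power of $t$. The assumptions $t\ge t_0(\mu)$ and $(x-1)(y-1)/t\le1$ delimit precisely the range left open by Proposition 3 and keep the Gaussian remainder constants $\mu$--uniform.

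The main obstacle is exactly this last step: extracting the \emph{full} factor $(xy)^{-1}$ from the double integral. Using $1-e^{-\xi}$ too crudely either makes the $s$--integral diverge as $s\to0$ or loses the $y^{-1}$; one has to genuinely play the vanishing $p_1^{(1/2)}(s,x,z)=O(z-1)$ as $z\to1$ against the weight $z^{-2}$ and organise the decomposition of the $s$--range so that $x^{-1}$ and $y^{-1}$ appear together with an $s$--integrable density. Everything else — the reduction to the functional estimate via \eqref{ac:formula} and $e^{-u}\ge1-u$, and the choice $C_2^{(\mu)}=c_1^{(\mu)}C$ — is then immediate.
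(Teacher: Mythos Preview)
Your route via the absolute continuity formula \eqref{ac:formula} with $\nu=1/2$ is genuinely different from the paper's. The paper works with the Hunt formula \eqref{eq:hunt:formula}: it bounds $r_1^{(\mu)}(t,x,y)$ from above using Laforgia's inequality \eqref{MBF:ineq:upper} for ratios of $I_\mu$ together with \eqref{eq:density:T1:1} for $q_x^{(\mu)}$, then splits the resulting time integral into three pieces $\mathcal{K}_1,\mathcal{K}_2,\mathcal{K}_3$ handled by the substitution $w=1/s-1/t$, Lemma~\ref{mu12}, and incomplete-Gamma asymptotics; finally it reorganises $1-r_1^{(\mu)}/p^{(\mu)}$ as $\mathcal{B}_1+\mathcal{B}_2-C_2\mathcal{B}_3$ and compares each $\mathcal{B}_j$ to $\mathcal{B}_1=1-e^{-2(x-1)(y-1)/t}$. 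Your setup replaces all of this by the single bridge identity $p_1^{(\mu)}=g^{(\mu)}\,\mathbf{E}^{(1/2)}_{x\to y}\bigl[\exp(-c_1^{(\mu)}\!\int_0^t R_s^{-2}\,ds)\bigr]$, which is correct, and the reduction via $e^{-u}\ge 1-u$ to the single moment bound $\mathbf{E}^{(1/2)}_{x\to y}\bigl[\int_0^t R_s^{-2}\,ds\bigr]\lesssim t/(xy)$ is clean; the cancellation of $R_s^{-2}$ against $m^{(1/2)}(dz)=z^2\,dz$ is a nice observation.

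That said, there is a genuine gap, and you name it yourself: the functional estimate is asserted but not proved. Your heuristic picture of the bridge (roughly linear interpolation $R_s\approx x+(y-x)s/t$ with Gaussian fluctuations of size $\sqrt{s(t-s)/t}$, split at $s\sim t/y$) gives the right order $t/y\approx t/(xy)$ since $1<x<2$, but converting this into a rigorous bound on the double integral $\int_0^t\!\int_1^\infty p_1^{(1/2)}(s,x,z)\,p_1^{(1/2)}(t-s,z,y)\,dz\,ds$ is precisely where all the work sits. The issue you flag is real: bounding $1-e^{-\xi}\le\min(1,\xi)$ uniformly either makes the $s$-integral diverge near $0$ or drops the factor $y^{-1}$, and the boundary vanishing of $p_1^{(1/2)}(s,x,\cdot)$ at $z=1$ has to be played off against the weight $z^{-2}$ and against the location of the Gaussian in $z$ in a way that is not shorter than the paper's explicit $\mathcal{K}_1,\mathcal{K}_2,\mathcal{K}_3$ computation. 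As written, the proposal is a correct and attractive reduction, but it stops short of a proof at the decisive step; to complete it you would still need an honest case-by-case estimate of the double integral of comparable length to the paper's argument.
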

\begin{proof}
Using \eqref{MBF:ineq:upper} to estimate ratios $I_{\mu}\left(\frac{y}{t-s}\right)/I_{\mu}\left(\frac{y}{t}\right)$ and $I_{\mu}\left(\frac{y}{t}\right)/I_{\mu}\left(\frac{xy}{t}\right)$ from above, we get
\formula{
I_{\mu}\left(\frac{y}{t-s}\right)\leq I_{\mu}\left(\frac{xy}{t}\right)\left(\frac{t}{t-s}\right)^{\mu}\exp{\left(\frac{y}{t-s}\right)}\exp{\left(-\frac{xy}{t}\right)}x^{\mu}.
}
Combining this together with the inequality $q_x^{(\mu)}(s)\leq x^{1/2-\mu}q_x^{(1/2)}(s)$, valid for $s>0$  and $\mu\geq1/2$ (see \eqref{eq:density:T1:1}), we obtain
\formula{
\frac{r_1^{(\mu)}(t,x,y)}{p^{(\mu)}(t,x,y) }\leq x^{\mu-1/2}\exp{\left(\frac{(x-y)^2}{2t}\right)}\frac{x-1}{\sqrt{2\pi}}\sqrt{t}\int_0^t \left(\frac{t}{t-s}\right)^{\mu+1/2}f_{A,B,t}(s)ds,
}
with the function $f_{A,B,t}(s)$ defined in \eqref{def:f}. We split the last integral into three  parts 
\formula{
\mathcal{K}_1&:=\int_0^t \deff ds,\\
\mathcal{K}_2&:=\int_0^{t/2} \left[\left(\frac{t}{t-s}\right)^{\mu+1/2}-1\right]\deff ds,\\
\mathcal{K}_3&:=\int_{t/2}^t \left[\left(\frac{t}{t-s}\right)^{\mu+1/2}-1\right]\deff ds
}
and estimate them separately. To simplify the notation let us denote
\formula{
\mathcal{A}=\mathcal{A}(t,x,y)=x^{\mu-1/2}\exp{\left(\frac{(x-y)^2}{2t}\right)}\frac{x-1}{\sqrt{2\pi}}\sqrt{t}.
}
To deal with $\mathcal{K}_1$ it is sufficient to use Lemma \ref{mu12} from Appendix. Indeed,  we have
\formula[ak1]{
\mathcal{A}\mathcal{K}_1=x^{\mu-1/2}\expz.
}
Now we focus our attention on the expression $\mathcal{K}_2$. Observe that for $s\in(0,t/2)$ we have $t/(t-s))^{\mu+1/2}-1\approx s/(t-s)$. Hence, making substitution $w=\frac{1}{s}-\frac{1}{t}$ in the integral $\mathcal{K}_2$, we can write
\formula{
\mathcal{K}_{2}
&\approx t^{-3/2}e^{-\frac{a+b}{t}}\int_{1/t}^{\infty}w^{-3/2}e^{-aw}e^{-b/(t^2w)}dw
\leq t^{-3/2}e^{-\frac{a+b}{t}}\int_{1/t}^{\infty}w^{-3/2}e^{-b/(t^2w)}dw\\
&\approx t^{-1/2}\cdot\frac{e^{-\frac{a+b}{t}}}{y}\int_0^{b/t}r^{-1/2}e^{-r}dr,
}
where we have substituted $r=b/(t^2w)$. Since $b/t\approx y^2/t\rightarrow\infty$ for $y>2>x$ and $xy/t\rightarrow\infty$, the last integral behaves like a constant. It implies
\formula[ak21]{
\mathcal{A}\mathcal{K}_{2}\lesssim 
\frac{x-1}{y}\expz.
}
To estimate $\mathcal{K}_3$ from above, let us observe that  it holds $(t/(t-s))^{\mu+1/2}-1\approx (t/(t-s))^{\mu+1/2}$, whenever $t/2<s<t$. Hence, putting once again $w=\frac{1}{s}-\frac{1}{t}$ we get
\formula{
\mathcal{K}_{3}
\approx t^{-\mu-1}e^{-\frac{a+b}{t}}\int_0^{1/t}w^{-\mu-1}e^{-aw}e^{-b/(t^2w)}dw.
}
Note that since $a/t\lesssim 1$ we have $e^{-aw}\approx1$. Thus, the substitution $r=b/(t^2w)$ leads to
\formula{
\mathcal{K}_{3}\approx t^{-\mu-1}e^{-\frac{a+b}{t}}\left(\frac{b}{t^2}\right)^{-\mu}\int_{b/t}^{\infty}r^{\mu-1}e^{-r}dr.
}
The last integral is the upper incomplete Gamma function $\Gamma(\mu,b/t)$, which behaves like $(b/t)^{\mu-1}e^{-b/t}$ since $b/t\approx y^2/t\rightarrow\infty$  (see 8.357 in \cite{bib:Gradshteyn Ryzhlik}). Therefore, we have
\formula{
\mathcal{K}_{3}\approx y^{-2}\expb \exp{\left(-\frac{(y-1)^2}{2t}\right)}
}
and consequently
\formula[ak22]{
\mathcal{A}\mathcal{K}_{3}&\approx (x-1)x^{\mu-1/2}\frac{\sqrt{t}}{y^2}\expb \exp{\left(\frac{(x-y)^2}{2t}\right)} \exp{\left(-\frac{(y-1)^2}{2t}\right)}\notag\\
&\lesssim \frac{x-1}{y^2}x^{\mu-1/2}\sqrt{t}\expb
}
due to $\frac{(x-y)^2}{2t}-\frac{(y-1)^2}{2t}=\frac{-(x-1)(2y-x-1)}{2t}<0$. Combining together \eqref{ak1}, \eqref{ak21} and \eqref{ak22} we obtain
\formula{
\frac{r_1^{(\mu)}(t,x,y)}{ p^{(\mu)}(t,x,y)}&\leq \mathcal{A} (\mathcal{K}_1+\mathcal{K}_{2}+\mathcal{K}_{3})\\
&\leq x^{\mu-1/2}\expz\left(1+C_1\frac{x-1}{y}(1+\frac{\sqrt{t}}{y})\right)\\
&\leq x^{\mu-1/2}\expz\left(1+C_2\frac{x-1}{y}\right)
}
for some constants $C_1, C_2>0$. Hence, from the Hunt formula \eqref{eq:hunt:formula}, we obtain
\formula{
\frac{p_1^{(\mu)}(t,x,y)}{p^{(\mu)}(t,x,y)}\geq 1-x^{\mu-1/2}\expz- C_2 \frac{x-1}{y}\expz.
}
Let us decompose the last expression as a sum $\mathcal{B}_1+\mathcal{B}_2-C_2\mathcal{B}_3$, where
\formula{
\mathcal{B}_1&=1-\expz,\\
\mathcal{B}_2&=\expz\left(1-x^{\mu-1/2}\right),\\
\mathcal{B}_3&=\frac{x-1}{y}\expz.
}
Observe that the term $\mathcal{B}_1$ contribute to  the leading part for the expansion of $p_1^{(\mu)}(t,x,y)$. Hence we start  with estimating $\mathcal{B}_2$. One can see that it holds
\formula{
\frac{\mathcal{B}_2}{\mathcal{B}_1}\approx\frac{1-x^{\mu-1/2}}{\expzz-1}\approx -\frac{t}{xy},
}
since $\frac{(x-1)(y-1)}{t}<1$ and $1<x<2<y$. Thus, there exists constant $C_3>0$ such that
\formula[b1]{
\mathcal{B}_1+\mathcal{B}_2\geq \left(1-\expz\right)\left(1-C_3\frac{t}{xy}\right).
}
On the other hand, we can estimate $\mathcal{B}_3$ as follows
\formula
{
\frac{\mathcal{B}_3}{\mathcal{B}_1}\approx\frac{x-1}{y}\frac{1}{\expzz-1}\approx \frac{t}{y^2},
}
with the same justification as for $\mathcal{B}_2.$ This estimate implies that for some $C_4, C_5>0$ we have
\formula[b2]{
\mathcal{B}_2\leq C_4  \frac{t}{y^2}\left(1-\expz\right)\leq C_5\frac{t}{xy}\left(1-\expz\right).
}
Collecting together \eqref{b1} and \eqref{b2} we arrive at
\formula[p1:tw7]{
\frac{p_1^{(\mu)}(t,x,y)}{p^{(\mu)}(t,x,y)}
&\geq \left(1-\expz\right)\left(1-(C_3+C_5)\frac{t}{xy}\right).
}
On the other hand, because of \eqref{pmu:infinity} for $n=1$, there exists $C_6>0$ such that 
\formula[pmu:n1]{
p^{(\mu)}(t,x,y)\geq \frac{(xy)^{-\mu-1/2}}{\sqrt{2\pi t}}\expa \left(1-C_6\frac{t}{xy}\right),
}
where $xy/t\rightarrow\infty$. Putting the last inequality to \eqref{p1:tw7} we get the desired result with the constant   $C_2^{(\mu)}:=C_3+C_5+C_6>0$.
\end{proof}

%%%%%%%%%%%%%%%%%%%%%%%%%%%%%%%%%%%%%%%%%%%%%%%%%%%%%%%%%
%%%%%%%%%%%%%%%%%%%%%%%%%%%%%%%%%%%%%%%%%%%%%%%%%%%%%%%%%
%%																			%%
%%																			%%
%%									APPENDIX									%%
%%																			%%
%%																			%%
%%%%%%%%%%%%%%%%%%%%%%%%%%%%%%%%%%%%%%%%%%%%%%%%%%%%%%%%%
%%%%%%%%%%%%%%%%%%%%%%%%%%%%%%%%%%%%%%%%%%%%%%%%%%%%%%%%%

\section{Appendix}
In this section we collect three lemmas, which play crucial r\^ole in the proofs of the main results of the paper. To make the paper more self--contained we recall the following formula involving the modified Bessel function of the second kind $K_{\mu}(z)$ (see 3.471. 9 in \cite{bib:Gradshteyn Ryzhlik})
\formula[K:integral]{
\int_0^\infty w^{\mu-1}e^{-cw}e^{-d/w}dw=2\left(\frac{d}{c}\right)^{\mu/2}K_{\mu}\left(2\sqrt{cd}\right),
}
which is valid for $c,d>0$ and $\mu\in\R$. Moreover,  since $K_{\mu}(z)\approx z^{-1/2}e^{-z}$ at infinity \eqref{K:infinity}, we obtain for $1\lesssim cd$ that
\formula[K:estimate]{
\int_0^\infty w^{\mu-1}e^{-cw}e^{-d/w}dw\approx c^{-\mu-1/4} d^{\mu/2-1/4} \exp{(-cd)}
}
On the other hand, if we put $\mu=1/2$ in \eqref{K:integral}, then we have
\formula[gr]{
\int_0^\infty w^{-1/2}e^{-cw}e^{-d/w}dw=\sqrt{\frac{\pi}{c}}\exp{\left(-2\sqrt{cd}\right)},
}
 The last equality will be used to prove  Lemma \ref{mu12} given below. Namely,  we derive an explicit formula for the integral $\int_0^t f_{A,B,t}(s)ds$ defined in \eqref{def:f}.

%%%%%%%%%%%%%%%%%%%%%%%%%%%%%%%%%%%%%%%%%%%%%%%%%%%%%%%%%
%%%%%%%%%%%%%%%%%%%%%%%%%%%%%%%%%%%%%%%%%%%%%%%%%%%%%%%%%
%%																			%%
%%																			%%
%%								APPENDIX		LEMMA  1							%%
%%																			%%
%%																			%%
%%%%%%%%%%%%%%%%%%%%%%%%%%%%%%%%%%%%%%%%%%%%%%%%%%%%%%%%%
%%%%%%%%%%%%%%%%%%%%%%%%%%%%%%%%%%%%%%%%%%%%%%%%%%%%%%%%%

\begin{lemma}\label{mu12}
Let $t>0$. Then we have
\formula{
\int_0^t \frac{1}{\sqrt{t-s}}\frac{1}{\sqrt{s^3}}\exp{\left(-\frac{A}{s}\right)}\exp{\left(-\frac{B}{t-s}\right)}ds=\sqrt{\frac{\pi}{At}}\exp{\left(-\frac{(\sqrt{A}+\sqrt{B})^2}{t}\right)}.
}
\end{lemma}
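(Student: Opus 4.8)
The plan is to reduce the stated integral to the elementary identity \eqref{gr}, which already evaluates a kernel of the form $w^{-1/2}e^{-cw}e^{-d/w}$, by means of the substitution $w=\frac1s-\frac1t$ used repeatedly in Section~4. As $s$ runs over $(0,t)$ the new variable $w$ runs over $(0,\infty)$ with the endpoints interchanged, and $ds=-s^{2}\,dw$; the sign coming from reversing the limits of integration cancels the one in $ds$, so nothing is lost.

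First I would simplify the algebraic prefactor. Rewriting $w=\frac1s-\frac1t$ as $t-s=st w$ gives $\sqrt{t-s}=\sqrt{s}\,\sqrt{t w}$, whence
\[
\frac{1}{\sqrt{t-s}}\,\frac{1}{\sqrt{s^{3}}}\cdot s^{2}=\frac{\sqrt s}{\sqrt{t-s}}=\frac{1}{\sqrt{t w}}.
\]
Next I would split the two exponential factors: from $\frac1s=w+\frac1t$ one gets $\exp(-A/s)=e^{-A/t}e^{-Aw}$, and since $s=\frac{t}{1+t w}$ forces $\frac{1}{t-s}=\frac{1+t w}{t^{2} w}=\frac1t+\frac{1}{t^{2} w}$, one gets $\exp\!\big(-B/(t-s)\big)=e^{-B/t}\exp\!\big(-\tfrac{B}{t^{2}}\cdot\tfrac1w\big)$.

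Putting these together, the integral becomes
\[
\frac{e^{-(A+B)/t}}{\sqrt t}\int_0^\infty w^{-1/2}\,e^{-Aw}\,\exp\!\left(-\frac{B/t^{2}}{w}\right)dw,
\]
and \eqref{gr} with $c=A$ and $d=B/t^{2}$ evaluates the remaining integral to $\sqrt{\pi/A}\,\exp(-2\sqrt{AB}/t)$. Multiplying out,
\[
\frac{e^{-(A+B)/t}}{\sqrt t}\,\sqrt{\frac{\pi}{A}}\,e^{-2\sqrt{AB}/t}=\sqrt{\frac{\pi}{At}}\,\exp\!\left(-\frac{A+B+2\sqrt{AB}}{t}\right)=\sqrt{\frac{\pi}{At}}\,\exp\!\left(-\frac{(\sqrt A+\sqrt B)^{2}}{t}\right),
\]
which is the assertion. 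I do not anticipate a genuine obstacle here — the argument is purely a change of variables followed by \eqref{gr}; the only point to keep in mind is the tacit hypothesis $A>0$ (equivalently $x>1$, which holds in every application of the lemma), needed both for convergence of the integral near $s=0$ and for $\sqrt{\pi/A}$ to be meaningful.
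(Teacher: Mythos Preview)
Your proof is correct and follows exactly the same approach as the paper: the substitution $w=\tfrac1s-\tfrac1t$ reduces the integral to the form $t^{-1/2}e^{-(A+B)/t}\int_0^\infty w^{-1/2}e^{-Aw}e^{-B/(t^2w)}\,dw$, after which \eqref{gr} finishes the job. You have simply written out in full the algebraic details that the paper leaves to the reader.
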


\begin{proof}
Making the substitution $w=\frac{1}{s}-\frac{1}{t}$, the last integral takes  a form
\formula{
t^{-1/2}\exp{\left(-\frac{c+d}{t}\right)}\int_{0}^{\infty}w^{-1/2}\exp{\left(-cw\right)}\exp{\left( -\frac{d}{t^2}\frac{1}{w}\right)}dw.
}
Using formula  \eqref{gr} we obtain thesis.
\end{proof}

%%%%%%%%%%%%%%%%%%%%%%%%%%%%%%%%%%%%%%%%%%%%%%%%%%%%%%%%%
%%%%%%%%%%%%%%%%%%%%%%%%%%%%%%%%%%%%%%%%%%%%%%%%%%%%%%%%%
%%																			%%
%%																			%%
%%								APPENDIX		LEMMA 2							%%
%%																			%%
%%																			%%
%%%%%%%%%%%%%%%%%%%%%%%%%%%%%%%%%%%%%%%%%%%%%%%%%%%%%%%%%
%%%%%%%%%%%%%%%%%%%%%%%%%%%%%%%%%%%%%%%%%%%%%%%%%%%%%%%%%

Next Lemma corresponds to $\int_0^t s \deff ds$. We derive exact formula for this  integral in  terms  of the error function.
\begin{lemma}\label{mu32}
Let $c,d>0$ and $t>0$. We have
\formula{
\int_0^t \frac{1}{\sqrt{t-s}}\frac{1}{\sqrt{s}}\exp{\left(-\frac{c}{s}\right)}\exp{\left(-\frac{d}{t-s}\right)}ds=\pi\left[1-\erf\left(\frac{\sqrt{c}+\sqrt{d}}{\sqrt{t}}\right)\right],
}
where $\erf(z)=\frac{2}{\sqrt{\pi}}\int_z^{\infty}\exp{(-u^2)}du, \  z>0,$ denotes the error function. Moreover if $(\sqrt{c}+\sqrt{d})/\sqrt{t}\rightarrow\infty$ then we get
\formula[erf:estimate]{
\int_0^\infty sf_{c,d}(s)ds\approx \frac{\sqrt{t}}{\sqrt{c}+\sqrt{d}}\exp{\left(-\frac{(\sqrt{c}+\sqrt{d})^2}{t}\right)}.
}
\end{lemma}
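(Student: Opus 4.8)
The plan is to reduce the first identity to a single Gaussian integral by a rational change of variable and then to read off the error function. Write $J:=\int_0^t (t-s)^{-1/2}s^{-1/2}\exp(-c/s-d/(t-s))\,ds$; since $f_{c,d}(s)=f_{c,d,t}(s)$ vanishes off $(0,t)$ and $s f_{c,d}(s)=s^{-1/2}(t-s)^{-1/2}\exp(-c/s-d/(t-s))$, this $J$ is exactly $\int_0^\infty s f_{c,d}(s)\,ds$, so both assertions of the lemma concern $J$. First I would substitute $s=t/(1+r^2)$, equivalently $r=\sqrt{(t-s)/s}$, which maps $(0,t)$ monotonically onto $(0,\infty)$, turns $\frac{ds}{\sqrt{s(t-s)}}$ into $\frac{2\,dr}{1+r^2}$, and, using $cr^2+d/r^2=(\sqrt c\,r-\sqrt d/r)^2+2\sqrt{cd}$, gives
\[
\frac cs+\frac d{t-s}=\frac{(1+r^2)}{t}\Bigl(c+\frac d{r^2}\Bigr)=\frac{(\sqrt c+\sqrt d)^2+(\sqrt c\,r-\sqrt d/r)^2}{t}.
\]
Hence $J=2e^{-(\sqrt c+\sqrt d)^2/t}\int_0^\infty\frac{1}{1+r^2}\exp\!\left(-\frac{(\sqrt c\,r-\sqrt d/r)^2}{t}\right)dr$.

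The key point is then to produce the incomplete integral. I would use $\frac{1}{1+r^2}=\int_0^\infty e^{-(1+r^2)y}\,dy$ and invoke Tonelli (all integrands nonnegative) to interchange the order of integration. The inner $r$-integral becomes $e^{2\sqrt{cd}/t}\int_0^\infty e^{-(y+c/t)r^2-(d/t)/r^2}\,dr$, which after $w=r^2$ and formula \eqref{gr} equals $\frac12\sqrt{\pi/(y+c/t)}\,e^{2\sqrt{cd}/t}\,e^{-2\sqrt{(y+c/t)(d/t)}}$. Substituting $z=\sqrt{y+c/t}$ (so $dy=2z\,dz$ cancels the $1/\sqrt{y+c/t}$ and $e^{-y}=e^{c/t}e^{-z^2}$), then completing the square $z^2+2\sqrt{d/t}\,z=(z+\sqrt{d/t})^2-d/t$ and putting $w=z+\sqrt{d/t}$, the whole prefactor collapses to $e^{(\sqrt c+\sqrt d)^2/t}$ (using $2\sqrt{cd}+c+d=(\sqrt c+\sqrt d)^2$) and one is left with $\sqrt\pi\,e^{(\sqrt c+\sqrt d)^2/t}\int_{(\sqrt c+\sqrt d)/\sqrt t}^\infty e^{-w^2}\,dw$. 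Multiplying back by $2e^{-(\sqrt c+\sqrt d)^2/t}$ yields $J=2\sqrt\pi\int_{(\sqrt c+\sqrt d)/\sqrt t}^\infty e^{-w^2}\,dw=\pi\bigl[1-\erf((\sqrt c+\sqrt d)/\sqrt t)\bigr]$, which is the claimed exact formula.

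For the asymptotic statement \eqref{erf:estimate} I would just invoke the standard two-sided Gaussian tail bound: for $z$ bounded away from $0$ one has $\int_z^\infty e^{-w^2}\,dw\approx z^{-1}e^{-z^2}$ — the upper bound from $\int_z^\infty e^{-w^2}dw\le\int_z^\infty\frac wz e^{-w^2}dw=\frac1{2z}e^{-z^2}$, and a matching lower bound from comparing the integrand with $e^{-w^2}(1+\tfrac1{2w^2})$, whose antiderivative is $-e^{-w^2}/(2w)$. Taking $z=(\sqrt c+\sqrt d)/\sqrt t\to\infty$ and absorbing the absolute constant $2\sqrt\pi$ into $\approx$ gives $\int_0^\infty s f_{c,d}(s)\,ds=\pi[1-\erf(z)]\approx z^{-1}e^{-z^2}=\dfrac{\sqrt t}{\sqrt c+\sqrt d}\exp\!\left(-\dfrac{(\sqrt c+\sqrt d)^2}{t}\right)$.

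The arithmetic is routine once the substitutions are chosen; the one genuinely nonobvious step — and hence the main obstacle — is seeing that the error function can only emerge from keeping one integration incomplete, which is exactly what inserting $\frac{1}{1+r^2}=\int_0^\infty e^{-(1+r^2)y}\,dy$ accomplishes. A naive assault on $J$ (for instance the substitution $w=1/s-1/t$ that worked for Lemma \ref{mu12}) leaves the stubborn factor $(1+tw)^{-1}$ and leads to a Bessel-type integral that does not reduce to elementary functions. One could alternatively obtain $J$ from Lemma \ref{mu12} by integrating its right-hand side in the parameter $A$ (using $\int_0^A e^{-\tilde A/s}\,d\tilde A=s(1-e^{-A/s})$), but this still forces one to evaluate $\int_0^t(s(t-s))^{-1/2}e^{-d/(t-s)}\,ds$ separately by the very same device, so it is no shorter and I would not pursue it.
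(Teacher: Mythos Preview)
Your argument is correct: the substitution $r=\sqrt{(t-s)/s}$, the identity $\frac{1}{1+r^2}=\int_0^\infty e^{-(1+r^2)y}\,dy$, the application of \eqref{gr}, and the subsequent completions of squares all check out and yield $J=2\sqrt{\pi}\int_{(\sqrt c+\sqrt d)/\sqrt t}^\infty e^{-w^2}\,dw$, which is the desired formula. The tail bound for the error function is standard and your justification is fine.

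The paper, however, takes a much shorter route --- essentially the one you dismiss in your closing paragraph, but with the integration limits chosen differently. Instead of integrating Lemma~\ref{mu12} in the parameter over $(0,A)$, which produces the extra boundary term $\int_0^t (s(t-s))^{-1/2}e^{-d/(t-s)}\,ds$ that worried you, the paper integrates over $(C,\infty)$. Then $\int_C^\infty e^{-c/s}\,dc=s\,e^{-C/s}$ gives \emph{exactly} $J$ on the left after Fubini, with no residual term, and on the right the substitution $w=(\sqrt c+\sqrt d)/\sqrt t$ immediately produces $2\sqrt{\pi}\int_{(\sqrt C+\sqrt d)/\sqrt t}^\infty e^{-w^2}\,dw$. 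The whole proof is three lines. Your method is self-contained and perhaps more illuminating about \emph{why} the error function appears (the incomplete Gaussian integral is visible from the start), but the paper's trick of integrating in the parameter from $C$ to $\infty$ rather than from $0$ to $C$ is the elegant shortcut you just missed.
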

\begin{proof}
Let $C>0$ be given arbitrary. Integrating the equality from Lemma \ref{mu12} with respect to $c$ on interval $(C,\infty)$, we have
\formula{
\int_C^{\infty}\int_0^t \frac{1}{\sqrt{t-s}}\frac{1}{\sqrt{s}}\exp{\left(-\frac{c}{s}\right)}\exp{\left(-\frac{d}{t-s}\right)}dsdc= \int_C^{\infty}\sqrt{\frac{\pi}{at}}\exp{\left(-\frac{(\sqrt{c}+\sqrt{d})^2}{t}\right)}dc.
}
Since the integrand on the left--hand side (LHS) is positive, we can use the Fubini--Tonelli theorem to change the order of integration as follows
\formula{
\int_C^{\infty}\int_0^t &\frac{1}{\sqrt{t-s}}\frac{1}{\sqrt{s}}\exp{\left(-\frac{c}{s}\right)}\exp{\left(-\frac{d}{t-s}\right)}dsdc\\
&=\int_0^t \left(\int_C^{\infty}\exp{\left(-\frac{c}{s}\right)}da\right)\frac{1}{\sqrt{t-s}}\frac{1}{\sqrt{s}}\exp{\left(-\frac{d}{t-s}\right)}ds\\
&=\int_0^t \frac{1}{\sqrt{t-s}}\frac{1}{\sqrt{s^3}}\exp{\left(-\frac{C}{s}\right)}\exp{\left(-\frac{d}{t-s}\right)}ds.
}
On the other hand, making substitution $w=\frac{\sqrt{c}+\sqrt{d}}{\sqrt{t}}$  in the right--hand side (RHS) we have
\formula{
 \int_C^{\infty}\sqrt{\frac{\pi}{at}}\exp{\left(-\frac{(\sqrt{c}+\sqrt{d})^2}{t}\right)}dc&=
 2\sqrt{\pi}\int_{\frac{\sqrt{C}+\sqrt{d}}{\sqrt{t}}}^{\infty}e^{-w^2}dw
 =\pi\left[1-\erf\left(\frac{\sqrt{C}+\sqrt{d}}{\sqrt{t}}\right)\right].
}
Comparing LHS with RHS give us thesis of the first part of Lemma \ref{mu32}. The second part follows from the asymptotic behaviour of the error function at infinity, namely we have $1-\erf(z)\approx e^{-z^2}/z$ as $z\rightarrow\infty$.
\end{proof}
The last lemma contains some estimates of exponents, which  play important r\^ole especially in the proof of  Proposition 4.

%%%%%%%%%%%%%%%%%%%%%%%%%%%%%%%%%%%%%%%%%%%%%%%%%%%%%%%%%
%%%%%%%%%%%%%%%%%%%%%%%%%%%%%%%%%%%%%%%%%%%%%%%%%%%%%%%%%
%%																			%%
%%																			%%
%%								APPENDIX		LEMMA 3							%%
%%																			%%
%%																			%%
%%%%%%%%%%%%%%%%%%%%%%%%%%%%%%%%%%%%%%%%%%%%%%%%%%%%%%%%%
%%%%%%%%%%%%%%%%%%%%%%%%%%%%%%%%%%%%%%%%%%%%%%%%%%%%%%%%%

\begin{lemma}\label{expo}
Fix $C>0$ and assume that $x,y>1$ and $t>0$. Then it holds that
\formula[boundb]{
\expb\leq \frac{t\left[\expa-\expb\right]}{2(x-1)(y-1)},
}
while for $\frac{(x-1)(y-1)}{t}\geq C$ we have
\formula[bounda]{
\expa\approx \expa-\expb.
} 
\end{lemma}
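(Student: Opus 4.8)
The plan is to prove the two estimates \eqref{boundb} and \eqref{bounda} by reducing both to elementary inequalities for the exponential function after a convenient change of variables. Throughout, write $u=x-1>0$ and $v=y-1>0$, so that $x-y = u-v$ and $x+y-2 = u+v$. Then
\formula{
\expa = \exp\left(-\frac{(u-v)^2}{2t}\right), \qquad \expb = \exp\left(-\frac{(u+v)^2}{2t}\right),
}
and the ratio of the two exponentials is
\formula{
\frac{\expb}{\expa} = \exp\left(-\frac{(u+v)^2-(u-v)^2}{2t}\right) = \exp\left(-\frac{2uv}{t}\right),
}
which is exactly $\expz$ in the paper's notation. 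So both claims are really statements about the single quantity $z:=2uv/t = 2(x-1)(y-1)/t>0$.

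For \eqref{boundb}, divide both sides by $\expa$; since that factor is positive, the inequality is equivalent to
\formula{
e^{-z} \leq \frac{1-e^{-z}}{z},
}
i.e. to $z e^{-z} \leq 1 - e^{-z}$, i.e. to $ze^{-z} + e^{-z} \leq 1$, i.e. to $(1+z)e^{-z}\leq 1$ for all $z>0$. This is the standard inequality $1+z \leq e^{z}$ rearranged, and it is proved in one line: the function $h(z)=(1+z)e^{-z}$ satisfies $h(0)=1$ and $h'(z) = -z e^{-z} < 0$ for $z>0$, so $h(z)<1$ on $(0,\infty)$. Undoing the substitution ($z = 2(x-1)(y-1)/t$) and multiplying back by $\expa$ gives precisely \eqref{boundb}; note $x,y>1$ guarantees $z>0$ so the argument applies.

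For \eqref{bounda}, observe that $\expa - \expb = \expa\,(1-e^{-z})$, so the claimed two-sided comparison $\expa \approx \expa-\expb$ is equivalent to $1-e^{-z}\approx 1$, with implied constants depending only on the lower bound $C$ on $z$. The upper direction $1-e^{-z}\leq 1$ is trivial. For the lower direction, $z\geq C$ forces $1-e^{-z}\geq 1-e^{-C}>0$, a positive constant depending only on $C$; hence $1\leq (1-e^{-C})^{-1}(1-e^{-z})$, which is the matching bound. Multiplying through by $\expa$ yields $\expa \approx \expa-\expb$ under the hypothesis $(x-1)(y-1)/t\geq C$, as claimed.

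There is no real obstacle here — the lemma is a pair of soft exponential estimates — so the only thing to be careful about is bookkeeping: keeping track of the substitution $z=2(x-1)(y-1)/t$, checking that $x,y>1$ indeed gives $z>0$ (needed so that the strict monotonicity argument in \eqref{boundb} is valid at the relevant arguments), and making explicit in \eqref{bounda} that the constants in $\approx$ are allowed to depend on $C$. I would present \eqref{boundb} first via the one-line calculus fact $(1+z)e^{-z}\le 1$, then \eqref{bounda} via $1-e^{-z}\ge 1-e^{-C}$, and in each case translate back to the original variables at the end.
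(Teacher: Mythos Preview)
Your proof is correct and follows essentially the same route as the paper's: both reduce to the single variable $z=2(x-1)(y-1)/t$, prove \eqref{boundb} via the elementary inequality $e^z-1\geq z$ (you phrase it equivalently as $(1+z)e^{-z}\leq 1$), and prove \eqref{bounda} by noting $1-e^{-z}$ is bounded between positive constants once $z$ is bounded below. The only cosmetic slip is that under the hypothesis $(x-1)(y-1)/t\geq C$ one has $z\geq 2C$ rather than $z\geq C$, but this does not affect the argument.
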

\begin{proof}
Since
\formula{
\frac{\expa-\expb}{\expa}=1-\expz
}
and the right--hand side  behaves like positive constant for $\frac{(x-1)(y-1)}{t}\geq C>0$, the proof of \eqref{bounda} is complete. To prove \eqref{boundb} it is enough to  write
\formula{
\expb=\frac{\expa-\expb}{\expzz-1}
}
and simply estimate denominator from below by using the inequality $e^z-1\geq z$, which is true for $z>0$.
\end{proof}

%%%%%%%%%%%%%%%%%%%%%%%%%%%%%%%%%%%%%%%%%%%%%%%%%%%%%%%%%
%%%%%%%%%%%%%%%%%%%%%%%%%%%%%%%%%%%%%%%%%%%%%%%%%%%%%%%%%
%%																			%%
%%																			%%
%%								ACKNOWLEDGMENTS								%%
%%																			%%
%%																			%%
%%%%%%%%%%%%%%%%%%%%%%%%%%%%%%%%%%%%%%%%%%%%%%%%%%%%%%%%%
%%%%%%%%%%%%%%%%%%%%%%%%%%%%%%%%%%%%%%%%%%%%%%%%%%%%%%%%%

\subsection*{Acknowledgments}
The author is very grateful to Jacek Ma{\l}ecki and Grzegorz Serafin for critical remarks and comments which improve the presentation of the paper.

%%%%%%%%%%%%%%%%%%%%%%%%%%%%%%%%%%%%%%%%%%%%%%%%%%%%%%%%%
%%%%%%%%%%%%%%%%%%%%%%%%%%%%%%%%%%%%%%%%%%%%%%%%%%%%%%%%%
%%																			%%
%%																			%%
%%								BIBLIOGRAPHY									%%
%%																			%%
%%																			%%
%%%%%%%%%%%%%%%%%%%%%%%%%%%%%%%%%%%%%%%%%%%%%%%%%%%%%%%%%
%%%%%%%%%%%%%%%%%%%%%%%%%%%%%%%%%%%%%%%%%%%%%%%%%%%%%%%%%

\end{document}